\newtheorem{thmenv}{Theorem}
\newtheorem{lemenv}[thmenv]{Lemma}
\newtheorem{corenv}[thmenv]{Corollary}
\theoremstyle{definition}
\begin{document}
\title{Counting Connected Set Partitions of Graphs}
\author{
Frank Simon\footnote{E-mail: {\tt simon@hs-mittweida.de}, Hochschule Mittweida (FH)  University of Applied Sciences, Fakult\"at Mathematik/Naturwissenschaften/Informatik, 
Technikumplatz 17, D-09648 Mittweida, Germany}\, 
\and 
Peter Tittmann\footnote{E-mail: {\tt peter@hs-mittweida.de}, Hochschule Mittweida (FH)  University of Applied Sciences, Fakult\"at Mathematik/Naturwissenschaften/Informatik, 
Technikumplatz 17, D-09648 Mittweida, Germany}\,
\and 
Martin Trinks\footnote{E-mail: {\tt trinks@hs-mittweida.de}, Hochschule Mittweida (FH)  University of Applied Sciences, Fakult\"at Mathematik/Naturwissenschaften/Informatik, 
Technikumplatz 17, D-09648 Mittweida, Germany}\,
}
\maketitle 
\begin{minipage}[c]{6cm}
The authors Frank Simon and Martin Trinks receive a grant from the European Union.
\end{minipage}
\hfill
\begin{minipage}[c]{6cm}
\includegraphics[width=3cm]{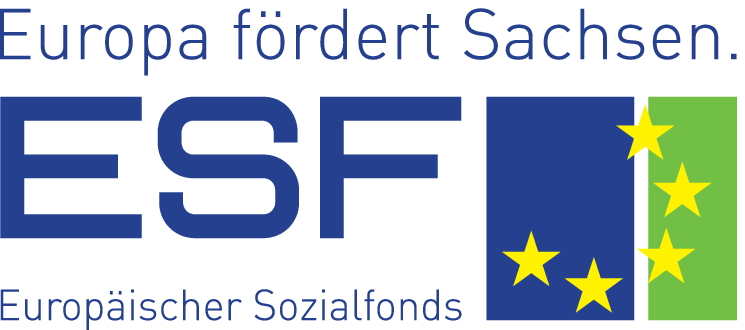}
\includegraphics[width=2cm]{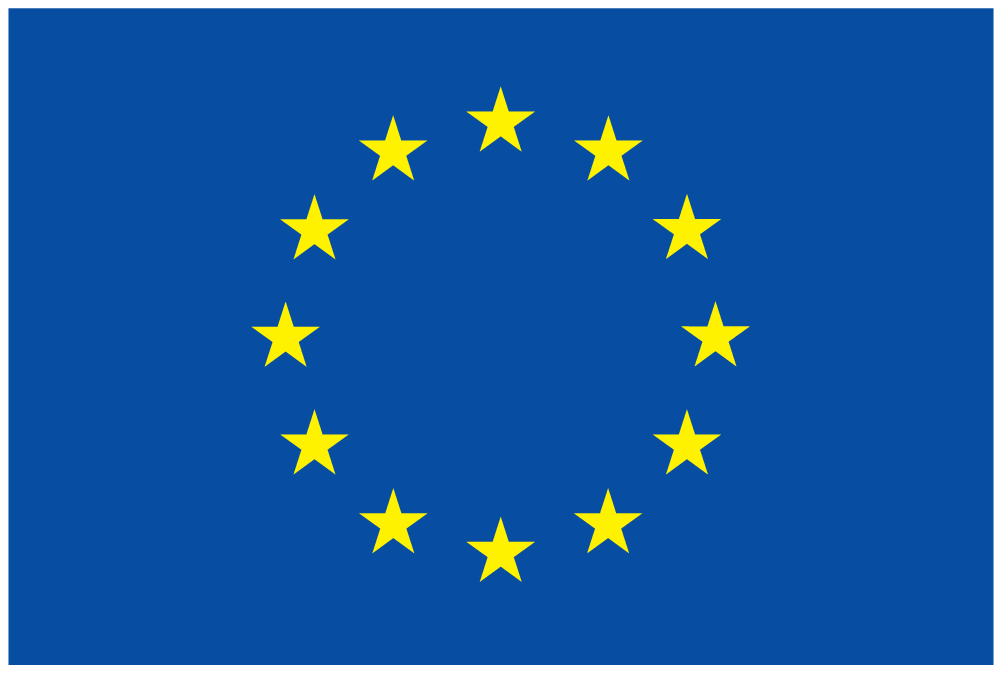}
\end{minipage}
\begin{abstract}
\noindent
Let $G=(V,E)$ be a simple undirected graph with $n$ vertices then a set partition $\pi=\{V_1, \ldots, V_k\}$ of the vertex set of $G$ is a connected set partition if each subgraph $G[V_j]$ induced by the blocks $V_j$ of $\pi$ is connected for $1\le j\le k$. Define $q_{i}(G)$ as the number of connected set partitions in $G$ with $i$ blocks. The partition polynomial is then $Q(G, x)=\sum_{i=0}^n q_{i}(G)x^i$. This paper presents a splitting approach to the partition polynomial on a separating vertex set $X$ in $G$ and summarizes some properties of the bond lattice. Furthermore the bivariate partition polynomial $Q(G,x,y)=\sum_{i=1}^n \sum_{j=1}^m q_{ij}(G)x^iy^j$ is briefly discussed, where $q_{ij}(G)$ counts the number of connected set partitions with $i$ blocks and $j$ intra block edges. Finally the complexity for the bivariate partition polynomial is proven to be $\sharp P$-hard.
\end{abstract}
\noindent 
{\bf Keywords:} graph theory, bond lattice, chromatic polynomial, splitting formula, bounded  treewidth, $\sharp P$-hard
\clearpage
\section{Introduction}

One of the best-studied graph polynomials is the chromatic polynomial $P(G,x)
$, giving the number of proper vertex colorings of an undirected graph $G=(V,E)$
with at most $x$ colors (see e.g. \cite{BirLew46, read68, Stanley73, Tutte70,
Tutte74}). Rota \cite{rota:OnTheFoundationsOfCombinatorialTheory} showed that the chromatic polynomial of a graph
$G$ is uniquely defined by its bond lattice $\Pi_c(G)$. The bond lattice can be
defined as a sublattice of the partition lattice $\Pi\left(  V\right)  $ on
$V$. A set partition $\pi\in\Pi\left(  V\right)  $ belongs to $\Pi_c(G)$ iff all
blocks of $\pi$ induce connected subgraphs of $G$.

We investigate here the rank-generating function of $\Pi_c(G)$, which we call the
\emph{partition polynomial} $Q(G,x)$. There are two ways to consider $Q\left(
G,x\right)  $, namely from an order-theoretic point of view (as
rank-generating function) or as a graph polynomial. We pursue here the second
way. The first natural question in this context is: Does the partition
polynomial $Q(G,x)$ determine the chromatic polynomial $P(G,x)$? We will show
that this is not the case. Even the converse statement is false. There are
pairs of non-isomorphic graphs with coinciding chromatic polynomials but
different partition polynomials. The next interesting problem is the
derivation of graph properties and graph invariants from the partition
polynomial. Which graphs are uniquely determined by their partition
polynomials? We call non-isomorphic graphs with coinciding partition
polynomial \emph{partition-equivalent}. Can we characterize \emph{partition}-equivalent graphs?

The computation of the partition polynomial is the next challenge. The obvious
way, to list all connected vertex partitions (partitions of $\Pi_c(G)$) and count
them with respect to the number of blocks is even for small graphs often too
laborious. Consequently, each method that simplifies the computation of the
partition polynomial is welcome.  

Thus, the next task is the identification of graph classes
permitting a polynomial-time computation of the partition polynomial. Let
$G=\left(  V,E\right)  $ be a graph and $G_{1}$ and $G_{2}$ edge-disjoint
subgraphs of $G$ that have a vertex set $U=V\left(  G_{1}\right)  \cap
V\left(  G_{2}\right)  $ in common. Then a \emph{splitting formula} permits to
find $Q\left(  G,x\right)  $ by separate computation of certain polynomials
assigned to $G_{1}$ and $G_{2}$. The here presented splitting formula for
the partition polynomial is the first step to find a polynomial time algorithm for
graphs of bounded treewidth.

Edges linking different blocks of a connected set partition form a \emph{cut set}.
An edge cut of $G$ corresponds to a cut set defined by a connected two-block
partition. In order to keep track of the number of edges forming a cut set, we
extend the partition polynomial into a bivariate polynomial. 

The paper is organized as follows. Section 2 provides a short introduction
into set partitions and their order properties. Connected set partitions of the
vertex set of a given graph are introduced in Section 3. The main object of
this paper, the partition polynomial $Q\left(  G,x\right)  $, is defined in
Section 4. This section provides also basic properties of the partition
polynomial, some graph invariants that can be derived from $Q\left(
G,x\right)  $, recurrence formulae, and results
for special graphs. Section 5 deals with one of the main results of the paper
- the splitting formula for the partition polynomial. The last section presents
the two-variable extension of the partition polynomial and some examples of
non-isomorphic partition-equivalent graphs as well as pairs of graphs that are
chromatically equivalent and not partition-equivalent and vice versa.

Finally, we can show that the computation of the extended partition polynomial $Q(G,x,y)$ is \#P-hard, whereas complexity results for the simple partition polynomial $Q(G,x)$ are still not known.

\section{Set partitions} 
A set partition $\pi=\{X_1, \ldots, X_k\}$ of a finite set $X$ is a set of mutually disjoint and nonempty subsets $X_i$, the blocks of $X$, so that the union of the $X_i$ is $X$. The number of blocks of a set partition $\pi$ is denoted by $|\pi|$ and the set of all set partitions of $X$ by $\Pi(X)$. The number of set partitions of an $n$-element set with exactly $k$ blocks is called the Stirling number of the second kind, which is denoted by $S(n,k)$. The Bell number $B(n)$ is the number of all set partitions of an $n$-element set, hence $B(n)=\sum_{k=0}^n S(n,k)$. Note that $B(0)$ and $S(0,0)$ equals $1$, as there is exactly one set partition of the empty set with no blocks, namely $\emptyset$.

Let $\sigma,\pi\in\Pi(X)$ and set $\sigma\le\pi$ if every block of $\sigma$ is a subset of a block in $\pi$, then $(\Pi(X), \le)$ becomes a poset. 
The maximal element $\hat{1}$ of this poset is the set partition that has only one block and the minimal element $\hat{0}$ is the set partition that consists only of singleton blocks. This poset is even more a lattice, i.e. for every two partitions $\pi,\sigma\in\Pi(X)$ there exists a smallest upper bound $\pi\vee\sigma$ and a greatest lower bound $\pi\wedge \sigma$ in $\Pi(X)$. 

Assume that $U$ is a subset of $X$, then $\pi=\{X_1,\ldots, X_k\}\in\Pi(X)$ induces the set partition $\sigma=\{U_1,\ldots,U_l\}\in\Pi(U)$ in $U$ by setting $U_i=X_i\cap U$, so that only the nonempty blocks $U_i$ are taken over to $\sigma$. The notation $\sigma=\pi\sqcap U$ is used to indicate that $\pi$ induces $\sigma$ in $U$. 

Let $X,Y$ be finite sets and $\pi=\{X_1, \ldots, X_k\}\in\Pi(X)$, $\sigma=\{Y_1,\ldots, Y_l\}\in \Pi(Y)$ set partitions of $X$ and $Y$, respectively. Then $\pi\sqcup \sigma\in \Pi(X\cup Y)$ denotes the smallest upper bound $\pi'\vee\sigma'$ of the set partitions $\pi'\in\Pi(X\cup Y)$ and $\sigma'\in\Pi(X\cup Y)$, where $\pi'$ consists of the blocks $X_i$ and the remaining blocks being singletons and $\sigma'$ consists of the blocks $Y_i$ and the remaining blocks being singletons.

\section{Connected set partitions}
A simple undirected graph or short graph is a pair $G=(V,E)$ consisting of a finite set $V$, the vertices, and a subset $E\subseteq V^{(2)}$, the edges, of the two element subsets of $V$. If $X\subseteq V$ is a vertex subset of $G$, then $G[X]$ denotes the subgraph induced by $X$ that has the vertex set $X$ and all edges $\{u,v\}\in E$ of $G$ that have both end vertices $u$ and $v$ in $X$.

Let $\pi=\{V_1, \ldots, V_k\}\in \Pi(V)$ then $\pi$ is a connected set partition in $G$ if for all $V_i$ the subgraphs $G[V_i]$ induced by $V_i$ are connected. The set of all connected set partitions in a graph $G$ is denoted by $\Pi_c(G)$. 

Consider the graph depicted in Figure~\ref{fig:exampleGraph}, that has 89 distinct connected set partitions,
\begin{figure}[h]
\begin{center}
\includegraphics[scale=0.6]{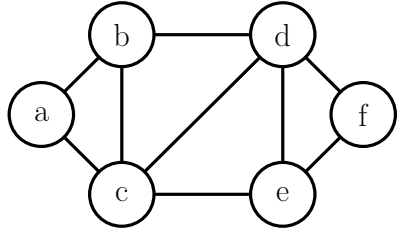}
\caption{Example graph\label{fig:exampleGraph}}
\end{center}
\end{figure}
two of them are $\pi_1=\{\{a,b\},\{c\},\{d,e\},\{f\}\}$ and $\pi_2=\{\{a,b,c\},\{d,e,f\}\}$. For the complete graph $K_{n}=(V,V^{(2)})$ with $n$ vertices it is 
$\Pi_{c}\left(K_{n}\right) =\Pi(V)$.

There is a constructive way to generate all connected set partitions of a given graph $G=(V,E)$. In preparation of this construction assume that the vertices $v\in V$ of the graph $G$ are considered as the one element sets $\{v\}$. Let $e\in E$, then $G/e$ is the graph obtained from $G$ by contraction of the edge $e$. If an edge $e=\{R,S\}$ is contracted the vertex obtained by merging the two vertices $R$ and $S$ is denoted by $R\cup S$ and is the union of the vertex subsets $R$ and $S$ of the original graph. As a result of any sequence of edge contractions of $G$, the graph $H=(W,F)$ is obtained, where $W$ forms a connected set partition of $V$. The set of all connected set partitions that can be obtained by such edge contractions of $G$ is again $\Pi_c(G)$. Hence $\Pi_c(G)$ is sometimes called the lattice of contractions.

Another interpretation of the set of connected set partitions of a graph $G=(V,E)$ is given by the following procedure. Denote by $\{G\}\in\Pi(V)$ the set partition of the vertex set induced by the connected components of the graph $G=(V,E)$, hence $\{G\}=\{V\}$ iff $G$ is connected. Let $H=\left( V,F\right)$ with $F\subseteq E$ be a subgraph of $G$. The components of $H$ then \emph{induce} a set partition $\left\{  H\right\} \le\{G\}$. Two vertices $u,v\in V$ are elements of the same block of $\left\{H\right\}  $ iff these vertices belong to the same component of $H$. Observe that a set partition $\pi\in\Pi\left(V\right)$ belongs to $\Pi_{c}\left(G\right)  $ iff there is an edge subset $F\subseteq E$ and a spanning subgraph $H=(V,F)$ such that $\pi=\left\{H\right\}$ holds. 
\begin{lemenv}
Let $G=\left(  V,E\right)  $ be a forest with $m$ edges. Then $\left\vert
\Pi_{c}\left(  G\right)  \right\vert =2^{m}$.
\end{lemenv}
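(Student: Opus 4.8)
The plan is to set up an explicit bijection between the edge subsets of $G$ and the connected set partitions of $G$, from which $\left\vert\Pi_{c}(G)\right\vert=2^{|E|}=2^{m}$ follows at once.

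The starting point is the characterization recorded immediately before the statement: a set partition $\pi\in\Pi(V)$ lies in $\Pi_{c}(G)$ if and only if $\pi=\left\{H\right\}$ for some spanning subgraph $H=(V,F)$ with $F\subseteq E$. This already provides a \emph{surjection} $\Phi\colon 2^{E}\to\Pi_{c}(G)$, $\Phi(F)=\left\{(V,F)\right\}$. What remains, and what the forest hypothesis is needed for, is to show that $\Phi$ is \emph{injective}. The idea is to reconstruct $F$ from the partition $\Phi(F)$. The relevant property of a forest is that any two vertices are joined by at most one path; in particular, for an edge $e=\{u,v\}\in E$ the only path from $u$ to $v$ in $G$ is $e$ itself, since a second one would close a cycle. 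Consequently, in any spanning subgraph $H=(V,F)$ the vertices $u$ and $v$ lie in the same component of $H$ precisely when $e\in F$: the direction ``$e\in F\Rightarrow$ connected'' is trivial, and conversely any path from $u$ to $v$ inside $H\subseteq G$ must be $e$, forcing $e\in F$. Hence $F$ is exactly the set of edges of $G$ whose endpoints fall into a common block of $\Phi(F)$, so $\Phi$ is injective, hence bijective, and the count follows.

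I expect the injectivity step to be the only real content of the proof; it is precisely where acyclicity enters, and it fails badly in the presence of cycles (for instance, in a triangle the one-block partition $\{V\}$ is the image of four different edge subsets). As a second, inductive route I would argue on $m$: fix any $e\in E$ and split $\Pi_{c}(G)$ according to whether the endpoints of $e$ share a block. The partitions in which they do share a block are in bijection with $\Pi_{c}(G/e)$, and those in which they do not are in bijection with $\Pi_{c}(G-e)$; the latter is a genuine dichotomy exactly because, in a forest, deleting $e$ disconnects its two endpoints, so no partition of $\Pi_{c}(G-e)$ can place them together. Since $G/e$ and $G-e$ are again forests with $m-1$ edges (contracting $e$ in a forest creates neither a multi-edge nor a cycle), the recursion gives $\left\vert\Pi_{c}(G)\right\vert=2^{m-1}+2^{m-1}=2^{m}$, with base case $m=0$, where the only connected set partition is the all-singletons partition.
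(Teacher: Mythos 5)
Your main argument is correct and is essentially the paper's proof: both establish the bijection $F\mapsto\left\{(V,F)\right\}$ between edge subsets and connected set partitions, with acyclicity supplying injectivity --- you by explicitly reconstructing $F$ as the set of edges of $G$ whose endpoints lie in a common block, the paper by deriving a cycle from two distinct edge sets inducing the same partition. Your secondary inductive route is also sound, and amounts to the bridge recurrence of Lemma~\ref{lemma_bridge} (every edge of a forest is a bridge) applied with induction on $m$.
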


\begin{proof}
Each edge subset $F\subseteq E$ of the forest $G=\left(  V,E\right)  $
generates a connected set partition $\left\{(V,F)\right\} $. Conversely, if $\pi\in\Pi_{c}\left(  G\right)  $ then there exists a subset $F\subseteq E$
with $\left\{(V,F)\right\}=\pi$. Assume there is a different edge set
$F^{\prime}\subseteq E$ with $\left\{(V,F^{\prime})\right\}=\pi$. In this
case there are components $K=(X,J)$ and $K'=(X, J^{\prime})$ of $H=(V,F)$ and $H'=(V, F')$, respectively with the same vertex set but different edge sets. Consequently, $K''=\left(  X,J\cup J^{\prime}\right)$ contains a cycle which contradicts the premise that $G$ is a forest. Hence each edge subset induces a unique set partition of $\Pi_{c}\left(  G\right)  $.
\end{proof}
Further properties of the set $\Pi_{c}\left(  G\right)  $ are easy to verify:
\begin{enumerate}
\item If $\sigma,\pi\in\Pi_{c}\left(  G\right)  $ then $\sigma\vee
\pi\in\Pi_{c}\left(  G\right)  $.
\item $\Pi_{c}\left(  G\right)  $ is a \emph{geometric lattice}, i.e.
each element of $\Pi_{c}\left(  G\right)  $ is the smallest upper bound of some elements covering $\hat{0}$. These connected set partitions consist of one block with exactly two elements and otherwise only singleton blocks. Hence every two element block of such an atomic connected set partition can be identified with an edge of $G$.
\end{enumerate}%
The lattice $\Pi_c(G)$ of connected set partitions also occurs under the name bond lattice or lattice of contractions in the literature \cite{stanley:ASymmetricFunctionGeneralizationOfTheChromaticPolynomialOfAGraph}.
\begin{figure}
[h]
\begin{center}
\includegraphics[scale=0.6]
{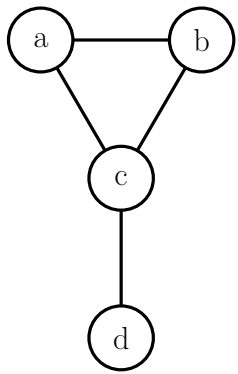}%
\caption{Small example graph}%
\label{graph2}%
\end{center}
\end{figure}
\begin{figure}
[ptb]
\begin{center}
\includegraphics
{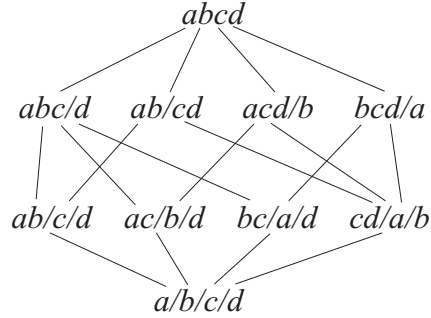}%
\caption{Lattice of connected set partitions}%
\label{lattice1}%
\end{center}
\end{figure}
Figure \ref{graph2} shows a graph with vertex set $\left\{  a,b,c,d\right\}
$. Figure \ref{lattice1} represents the lattice of connected set partitions of
this graph.

A \emph{vertex coloring} of $G$ is a function $\phi:V\rightarrow X$ from the set of vertices of $G$ to a finite set of colors $X$. A vertex
coloring $\phi$ is \emph{proper} if $\left\{  u,v\right\}  \in E$ implies
$\phi\left(  u\right)  \neq\phi\left(  v\right)  $. Denote by $P\left(
G,x\right)  $ the chromatic polynomial of $G$, i.e. the number of proper
vertex colorings of $G$ using a color set with exactly $x$ colors. For
$x\in\mathbb{N}$ let $X$ be a set of $x$ colors. For $\pi\in\Pi\left(
V\right)  $ let $f_{G}\left(  \pi,x\right)  $ be the number of vertex
colorings of $G$ with colors of $X$ such that all vertices that belong to one
block of $\pi$ are colored alike and such that end vertices of an edge that
link different blocks of $\pi$ are colored differently. 
Let $G_{\pi}$ the graph obtained from $G$ by merging all vertices that are
contained in one and the same block of $\pi$. Then we see that the function
$f_{G}$ can be represented as a chromatic polynomial by $f_{G}\left(
\pi,x\right)  =P\left(  G_{\pi},x\right)  $.
For each coloring $\phi:V\rightarrow X$ there exists a unique
$\pi\in\Pi_{c}\left(  G\right)  $ such that $\phi$ contributes to
$P(G_\pi,x)  $. The number of all colorings with at most $x$
colors that assign the same color to all vertices of a block of $\pi$ is
$x^{\left\vert \pi\right\vert }$. Let $\pi\in\Pi_{c}\left(  G\right)
$, then it is%
\begin{align*}
x^{\left\vert \pi\right\vert }=\sum_{\substack{\sigma\geq\pi\\\sigma
\in\Pi_{c}\left(  G\right)  }}P(G_\sigma, x).
\end{align*}
and by M\"{o}bius inversion the equation
\begin{align*}
P(G_\pi,x)&=\sum_{\substack{\sigma\geq\pi\\\sigma\in\Pi_c\left(G\right)}}
\mu\left(  \pi,\sigma\right)
x^{\left\vert \sigma\right\vert }
\end{align*}
is obtained. The chromatic polynomial of $G$ corresponds to $f_{G}\left(  \hat{0},x\right)
=P(G,x)$, which gives Theorem~\ref{thm:rota} presented in \cite{rota:OnTheFoundationsOfCombinatorialTheory}.
\begin{thmenv}[Rota]The chromatic polynomial of a graph $G=\left(  V,E\right)  $ satisfies%
\begin{align*}
P\left(  G,x\right)  =\sum_{\sigma\in\Pi_{c}\left(  G\right)  }%
\mu\left(  \hat{0},\sigma\right)  x^{\left\vert \sigma\right\vert }.
\end{align*}
\label{thm:rota}
\end{thmenv}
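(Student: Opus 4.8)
The plan is to set up the standard ``all colorings versus proper colorings'' identity over the lattice $\Pi_c(G)$, invert it by M\"obius inversion, and then specialize to the bottom element $\hat 0$. Fix $x\in\mathbb{N}$ and a color set $X$ with $|X|=x$. For $\pi\in\Pi_c(G)$ I would count the colorings $\phi\colon V\to X$ that are constant on every block of $\pi$: there are exactly $x^{|\pi|}$ of them, since a color can be chosen freely and independently for each of the $|\pi|$ blocks. This is precisely the quantity $\sum_{\sigma\ge\pi}P(G_\sigma,x)$ in disguise, and the first task is to make that precise.

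To do so, classify each such $\phi$ by the partition $\sigma(\phi)$ into maximal monochromatic connected regions: put $u\sim v$ when $u$ and $v$ are joined by a path all of whose vertices receive the color $\phi(u)$. Then $\sigma(\phi)\in\Pi_c(G)$, each block being connected by construction, and $\sigma(\phi)\ge\pi$ because every block of $\pi$ is connected and monochromatic, hence contained in a block of $\sigma(\phi)$. Moreover $\phi$ descends to a coloring $\bar\phi$ of $G_{\sigma(\phi)}$ that is proper, since two adjacent blocks of $\sigma(\phi)$ carrying the same color would contradict the maximality built into the definition of $\sigma(\phi)$.

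The key step is to check that $\phi\mapsto(\sigma(\phi),\bar\phi)$ is a bijection from the colorings of $G$ constant on the blocks of $\pi$ onto the pairs consisting of some $\sigma\in\Pi_c(G)$ with $\sigma\ge\pi$ together with a proper coloring of $G_\sigma$. Surjectivity is immediate: a proper coloring of $G_\sigma$ pulls back to a coloring of $G$ constant on the blocks of $\sigma$, hence on those of $\pi$. Injectivity amounts to showing that the maximal-monochromatic-region partition of such a pullback is $\sigma$ again, and this is the one place I expect genuine care to be needed: one argues that a monochromatic path in $G$ connecting two \emph{distinct} blocks of $\sigma$ would traverse a chain of pairwise adjacent, equally colored $\sigma$-blocks, contradicting properness of the coloring of $G_\sigma$. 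Granting the bijection and counting both sides yields
\begin{align*}
x^{|\pi|}=\sum_{\substack{\sigma\ge\pi\\\sigma\in\Pi_c(G)}}P(G_\sigma,x).
\end{align*}

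Finally, since $\Pi_c(G)$ is a finite (indeed geometric) lattice, M\"obius inversion in $\Pi_c(G)$ converts this into
\begin{align*}
P(G_\pi,x)=\sum_{\substack{\sigma\ge\pi\\\sigma\in\Pi_c(G)}}\mu(\pi,\sigma)\,x^{|\sigma|}.
\end{align*}
Specializing $\pi=\hat 0$ finishes the argument: $G_{\hat 0}=G$ because $\hat 0$ consists only of singleton blocks, and every $\sigma\in\Pi_c(G)$ satisfies $\sigma\ge\hat 0$, so the sum ranges over all of $\Pi_c(G)$ and gives $P(G,x)=\sum_{\sigma\in\Pi_c(G)}\mu(\hat 0,\sigma)x^{|\sigma|}$. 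As this holds for every $x\in\mathbb{N}$, it is an identity of polynomials. The main obstacle is entirely contained in the bijection of the third paragraph; the rest is bookkeeping and a direct appeal to M\"obius inversion.
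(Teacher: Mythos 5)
Your proposal is correct and follows essentially the same route as the paper: classify colorings that are constant on the blocks of $\pi$ by their maximal monochromatic connected regions to obtain $x^{|\pi|}=\sum_{\sigma\ge\pi,\,\sigma\in\Pi_c(G)}P(G_\sigma,x)$, apply M\"obius inversion in $\Pi_c(G)$, and specialize to $\pi=\hat 0$. The only difference is that you spell out the bijection behind the counting identity in more detail than the paper does.
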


\section{The partition polynomial}
The notion of connected set partitions of a graph $G=(V,E)$ is utilized to define the partition polynomial $Q(G,x)$
\begin{align}
Q(G,x)&=\sum_{\pi\in\Pi_c(G)} x^{|\pi|}=\sum_{i=0}^n q_{i}(G)x^i,
\end{align}
where the coefficients $q_{i}(G)$ count the number of connected set partitions of $G$ with $i$ blocks, e.g. consider the example graph $G$ depicted in Figure~\ref{fig:exampleGraph}, page~\pageref{fig:exampleGraph} then it is
\begin{align*}
Q(G,x)&=x^6+9x^5+28x^4+35x^3+15x^2+x.
\end{align*}
Define $\emptyset=(\emptyset,\emptyset)$ to be the null graph. It is  $Q(\emptyset, x)=1$, which corresponds to the only connected set partition of the null graph - the empty set partition $\emptyset$, that has no blocks at all. For no graph other than the null graph, the empty set partition is an element of $\Pi(V)$, which gives $q_0(G)=\delta_{0,n}$. For the empty graph (edgeless graph) $E_n=(V, \emptyset)$ with $n$ vertices the partition polynomial $Q(E_n, x)=x^{n}$ is obtained. The only connected set partition in $\Pi_c(E_n)$ is $\hat{0}$. The set partition $\hat{0}$ is a connected set partition for any graph. Hence the partition polynomial has degree $n=|V|$. The greatest set partition $\hat{1}\in\Pi(V)$, consisting of one block, namely $V$ itself, is connected iff $G$ is connected. Thus it is
\begin{align*}
q_1(G)&=\begin{cases}
1 & G\text{ is connected}  \\
0 & \text{else}.
\end{cases}
\end{align*}
More generally, the minimal degree of the partition polynomial (the least appearing power in $Q(G,x)$) equals the number of components of $G$.

In the complete graph $K_n$ with $n$ vertices, each set partition of the vertex set $V$ is a connected set partition, which implies 
\begin{align*}
Q(K_n,x)&=\sum_{i=0}^n S(n,i)x^i.
\end{align*}
A connected set partition of $G=(V,E)$ with exactly $n-1$ blocks consists of one pair (two-element set) and $n-2$ singletons (one-element sets). The pair corresponds to an edge of $G$. Consequently, $q_{n-1}(G)=|E|$ is obtained. A cutset in a graph $G=(V,E)$ is a subset $C\subseteq E$, such that $(V, E\setminus C)$ has more components than $G$, but no proper subset of $C$ has this property. If $G$ is connected then $q_2(G)$ equals the number of cutsets of $G$.

\begin{lemenv}
Let $G=(V,E)$ be a graph with $n$ vertices and $m$ edges. Then the coefficient
$q_{n-2}(G)$ of the partition polynomial of $G$ satisfies%
\begin{align*}
q_{n-2}\left(  G\right)  =\binom{m}{2}-~2\cdot~\text{number of triangles of
}G.
\end{align*}
\end{lemenv}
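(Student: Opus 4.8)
The plan is to partition the connected set partitions $\pi\in\Pi_c(G)$ with $|\pi|=n-2$ into two disjoint families according to their block sizes. A set partition of an $n$-element set into $n-2$ blocks is obtained either by grouping three elements into one block (leaving $n-3$ singletons) or by forming two disjoint two-element blocks (leaving $n-4$ singletons); no other list of block sizes summing to $n$ with exactly $n-2$ parts is possible. Hence every such $\pi$ has exactly one block of size three (the rest singletons) or exactly two blocks of size two (the rest singletons). Connectivity forces a size-two block to be an edge of $G$, and a size-three block to induce a connected subgraph of $G$ on three vertices.

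First I would count the partitions of the second type. They correspond bijectively to unordered pairs $\{e_1,e_2\}$ of distinct edges of $G$ with $e_1\cap e_2=\emptyset$. Since two distinct edges of a simple graph share at most one vertex, the pairs that do share a vertex are in bijection with pairs $(v,\{e_1,e_2\})$ where $v$ is their common endpoint and $e_1,e_2$ are two of the $\deg v$ edges at $v$; there are $\sum_{v\in V}\binom{\deg v}{2}$ of these. Hence the number of partitions of the second type equals $\binom{m}{2}-\sum_{v\in V}\binom{\deg v}{2}$.

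Next I would count the partitions of the first type, i.e.\ the $3$-subsets $\{u,v,w\}\subseteq V$ with $G[\{u,v,w\}]$ connected; such a subset spans either two edges (an induced path $P_3$) or three edges (a triangle). Double-count the ``cherries'' of $G$, meaning pairs $(v,\{e_1,e_2\})$ of a vertex and two incident edges: there are $\sum_{v\in V}\binom{\deg v}{2}$ of them, while each triangle of $G$ accounts for three cherries (one per vertex) and each induced $P_3$ accounts for exactly one (centred at its middle vertex). Writing $t$ for the number of triangles of $G$, the number of induced copies of $P_3$ is therefore $\sum_{v\in V}\binom{\deg v}{2}-3t$, so the number of connected $3$-subsets is $\bigl(\sum_{v\in V}\binom{\deg v}{2}-3t\bigr)+t=\sum_{v\in V}\binom{\deg v}{2}-2t$.

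Adding the counts of the two types, the terms $\sum_{v\in V}\binom{\deg v}{2}$ cancel, yielding $q_{n-2}(G)=\binom{m}{2}-2t$, as claimed. The only delicate point is the bookkeeping in the cherry count — recognising that a triangle is counted three times and an induced $P_3$ exactly once — together with the parallel observation that adjacent edge pairs are precisely the cherries; it is exactly this that makes the two $\sum_{v\in V}\binom{\deg v}{2}$ contributions line up and cancel.
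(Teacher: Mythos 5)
Your proof is correct and takes essentially the same route as the paper: both split the $(n-2)$-block connected partitions into those with two $2$-element blocks and those with one $3$-element block, identify the former with pairs of non-adjacent edges, and obtain the $-2t$ correction from the fact that a triangle's vertex set arises from three distinct adjacent edge pairs. The only difference is bookkeeping: the paper maps adjacent edge pairs directly onto the $3$-block partitions, while you count both families via $\sum_{v\in V}\binom{\deg v}{2}$ (cherries) and let that term cancel, which is the same counting in slightly longer form.
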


\begin{proof}
A connected set partition of $G$ with $n-2$ blocks can have two different
types:\newline(1) Set partitions consisting of two vertex pairs and singletons
else.\newline(2) Set partitions with one tri-element block and singletons.\newline
Each set partition of the first type corresponds to a unique selection of two
non-adjacent edges of $G$. A selection of two edges that have one vertex in
common generates a set partition of the second type. However, if the three
vertices of the block induce a triangle in $G$ then there exist two other
selections of two edges generating the same connected block. The subtraction
of twice the number of triangles of $G$ in the given formula takes this fact
into account.
\end{proof}

In a similar way the following equation for a triangle-free graph
$G$ with $m$ edges is proven:
\begin{align*}
q_{n-3}\left(  G\right)  =\binom{m}{3}-~3\cdot~\text{number of four-cycles of
}G
\end{align*}
As a more general conclusion the following statement is obtained.

Let $G$ be a graph with $m$ edges and $Q\left(  G,x\right)  $ its partition
polynomial. If for $j=0,...,k-1$ the relations $q_{n-j}=\binom{m}{j}$ and
$q_{k}<\binom{m}{k}$ are satisfied then the girth of $G$ equals $k+1$. The
number of cycles of length $k+1$ is in this case%
\begin{align*}
\frac{1}{k}\left[  \binom{m}{k}-q_{n-k}\left(  G\right)  \right]  .
\end{align*}
Dowling and Wilson \cite{dowling:WhitneyNumberInequalitiesForGeometricLattices} proved a theorem on Whitney numbers of the second kind in geometric
lattices that translates directly into an inequality for the coefficients of
the partition polynomial.

\begin{thmenv}
[Dowling and Wilson]The coefficients $q_{i}\left(  G\right)  $ of the
partition polynomial satisfy for any connected graph with $n$ vertices and for each
$k\in\left\{  1,...,n\right\}  $ the inequality%
\begin{align*}
\sum_{i=1}^{k}q_{i}\left(  G\right)  \geq\sum_{i=0}^{k-1}q_{n-i}\left(
G\right)  .
\end{align*}
\end{thmenv}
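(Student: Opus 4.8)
The plan is to recognize the asserted inequality as an instance of the Dowling--Wilson inequality for the Whitney numbers of the second kind of the geometric lattice $\Pi_c(G)$, so that essentially all that remains is to set up the dictionary correctly.

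\textbf{Step 1: the lattice data.} First I would collect the order-theoretic facts already established. For a connected graph $G$ on $n$ vertices, $\Pi_c(G)$ has least element $\hat 0$ (the all-singletons partition) and greatest element $\hat 1=\{V\}$, its atoms are exactly the edges of $G$ (partitions with one two-element block and singletons otherwise), and a connected set partition $\pi$ has rank $n-|\pi|$; hence $\Pi_c(G)$ is geometric of rank $n-1$. Its Whitney numbers of the second kind are therefore $W_j=\#\{\pi\in\Pi_c(G):|\pi|=n-j\}=q_{n-j}(G)$ for $0\le j\le n-1$. Rewriting the two sides in these terms gives $\sum_{i=1}^{k}q_i(G)=\sum_{j=n-k}^{n-1}W_j$ and $\sum_{i=0}^{k-1}q_{n-i}(G)=\sum_{j=0}^{k-1}W_j$, so the statement is precisely that in $\Pi_c(G)$ the sum of the top $k$ Whitney numbers dominates the sum of the bottom $k$.

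\textbf{Step 2: invoke Dowling--Wilson.} I would then quote the theorem of Dowling and Wilson \cite{dowling:WhitneyNumberInequalitiesForGeometricLattices}: in every finite geometric lattice of rank $r$ one has $\sum_{j=0}^{k-1}W_j\le\sum_{j=0}^{k-1}W_{r-j}$. Taking $r=n-1$ yields the claim for every $k$; for the values $k>n/2$ one may alternatively note that the statement for $k$ and for $n-k$ are equivalent after cancelling the common middle summands, and that $k=n$ forces equality since both sides then equal $|\Pi_c(G)|$.

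\textbf{Step 3: where the real work sits.} The only substantial ingredient is the Dowling--Wilson inequality itself, whose proof I expect to be the main obstacle in any self-contained treatment. Its engine is a complementation argument: a geometric lattice is relatively complemented, and if $x^c$ is a complement of a flat $x$ then semimodularity forces $r(x^c)\ge r-r(x)$, so complements of low flats are high flats. Promoting this to an honest injection from the flats of rank $\le k-1$ into the flats of rank $\ge r-k+1$ requires choosing the complements coherently, which Dowling and Wilson accomplish by verifying a Hall-type condition that again rests on the semimodular rank inequality. Since all of this is exactly the content of the cited theorem, in the present setting nothing beyond the bookkeeping of Steps 1--2 is needed.
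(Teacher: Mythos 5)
Your proposal is correct and matches the paper's treatment: the paper gives no independent proof but simply cites Dowling and Wilson's partial-sum inequality for Whitney numbers of the second kind in a geometric lattice and translates it via the identification $W_j=q_{n-j}(G)$ in the rank-$(n-1)$ lattice $\Pi_c(G)$, exactly as in your Steps 1--2. Your Step 3 sketch of the complementation argument is extra context rather than a required ingredient, since the paper likewise treats the Dowling--Wilson theorem as a cited black box.
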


\begin{lemenv}
\label{lemma_bridge}If $e\in E$ is a bridge of $G=\left(  V,E\right)  $ then%
\begin{align*}
Q\left(  G,x\right)  =Q\left(  G/e,x\right)  +Q\left(  G-e,x\right)  .
\end{align*}
\end{lemenv}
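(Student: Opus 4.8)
The plan is to partition the set $\Pi_c(G)$ of connected set partitions of $G$ according to whether the two end vertices of the bridge lie in a common block, and then to identify the two resulting families with $\Pi_c(G/e)$ and $\Pi_c(G-e)$ respectively. Write $e=\{u,v\}$. Let $\mathcal{A}$ be the set of $\pi\in\Pi_c(G)$ in which $u$ and $v$ belong to the same block, and let $\mathcal{B}$ be those in which they belong to distinct blocks; then $\Pi_c(G)$ is the disjoint union $\mathcal{A}\cup\mathcal{B}$, so it suffices to show $\sum_{\pi\in\mathcal{A}}x^{|\pi|}=Q(G/e,x)$ and $\sum_{\pi\in\mathcal{B}}x^{|\pi|}=Q(G-e,x)$, and then add.

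For the first identity I would exhibit a bijection between $\mathcal{A}$ and $\Pi_c(G/e)$ that preserves the number of blocks. Sending $\pi\in\mathcal{A}$ to the partition $\pi'$ of the vertex set of $G/e$ obtained by replacing $u$ and $v$ (which share a block) by the merged vertex, and conversely splitting that merged vertex back into $u$ and $v$, is clearly a bijection with $|\pi|=|\pi'|$. The only block affected is the block $B$ containing $u$ and $v$, and since $e\in E(G[B])$ one has $(G/e)[B']=G[B]/e$, where $B'$ is the image of $B$. The elementary fact to record is that contracting a single edge of a graph, or reversing such a contraction, preserves connectedness, so $G[B]$ is connected if and only if $(G/e)[B']$ is; hence $\pi\in\Pi_c(G)$ if and only if $\pi'\in\Pi_c(G/e)$. (Contracting a bridge produces no parallel edges, since a common neighbour of $u$ and $v$ would give a path from $u$ to $v$ avoiding $e$; in any case parallel edges are irrelevant to the connectedness argument.)

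For the second identity I would argue that $\mathcal{B}$ coincides with $\Pi_c(G-e)$ as sets of partitions of the common vertex set $V$, with the number of blocks trivially preserved since the correspondence is the identity on partitions. If $\pi\in\mathcal{B}$ then $e$ joins two different blocks of $\pi$, so $G[B]=(G-e)[B]$ for every block $B$ and $\pi\in\Pi_c(G-e)$. Conversely, let $\pi\in\Pi_c(G-e)$: this is where the bridge hypothesis is essential. If $u$ and $v$ lay in a common block $B$, then $(G-e)[B]$ being connected would provide a path from $u$ to $v$ inside $G-e$, contradicting that $e$ is a bridge; hence $\pi\in\mathcal{B}$, and re-adding $e$ (which again only links two distinct blocks) leaves every induced block subgraph unchanged, so $\pi\in\Pi_c(G)$.

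Combining, $Q(G,x)=\sum_{\pi\in\mathcal{A}}x^{|\pi|}+\sum_{\pi\in\mathcal{B}}x^{|\pi|}=Q(G/e,x)+Q(G-e,x)$. I expect the one genuinely delicate point to be the converse direction of the second identity: the bridge property is exactly what forbids $\Pi_c(G-e)$ from containing a partition with $u$ and $v$ in the same block, which would otherwise destroy the disjointness of the decomposition and cause over-counting. The contraction/connectedness equivalence used for the first identity is standard but worth stating explicitly, together with the observation that $e\in E(G[B])$ is precisely what makes the contraction well defined inside the block.
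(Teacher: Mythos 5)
Your proposal is correct and follows essentially the same route as the paper's proof: split $\Pi_c(G)$ according to whether the endpoints of $e=\{u,v\}$ share a block, identify the first class with $\Pi_c(G/e)$ via merging/splitting the endpoints, and the second class with $\Pi_c(G-e)$. You merely spell out the details the paper leaves implicit, in particular that the bridge hypothesis is what guarantees no partition in $\Pi_c(G-e)$ has $u$ and $v$ in a common block, which is indeed the point where the hypothesis is used.
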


\begin{proof}
Let $e=\left\{  u,v\right\}  $ be a bridge of $G$. Each connected set partition of $G$ belongs to one of two classes:\newline(1) Set partitions that contain a block
$X$ with $u\in X$ and $v\in X$.\newline(2) Set partitions for which $u$ and $v$
belong to different blocks.\newline Each set partition of the first class
corresponds to a connected set partition of $G/e$ that is obtained by replacing
$u$ and $v$ by a single vertex. The set partitions of the second class are exactly the set partitions of $G-e$.
\end{proof}

\begin{corenv}
\label{cor_arti}Let $G=\left(  V,E\right)  $ be a graph and $Q\left(
G,x\right)  $ its partition polynomial. If $v\in V$ is a vertex of degree 1,
then%
\begin{align*}
Q\left(  G,x\right)  =\left(  1+x\right)  Q\left(  G-v,x\right)  .
\end{align*}
\end{corenv}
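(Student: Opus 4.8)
The plan is to derive this corollary directly from Lemma~\ref{lemma_bridge} by recognizing that the single edge incident to a degree-one vertex is always a bridge. Let $v \in V$ have degree $1$, and let $e = \{v,w\}$ be the unique edge incident to $v$. Since $v$ has no other neighbor, removing $e$ disconnects $v$ from the rest of the graph (it becomes an isolated vertex in its own component), so $e$ is a bridge of $G$. Hence Lemma~\ref{lemma_bridge} applies and gives $Q(G,x) = Q(G/e,x) + Q(G-e,x)$.

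The key step is then to identify the two graphs on the right-hand side. Contracting $e$ merges $v$ into $w$; since $v$ contributed no edges other than $e$, the result $G/e$ is isomorphic to $G-v$, so $Q(G/e,x) = Q(G-v,x)$. For the second term, $G-e$ consists of the graph $G-v$ together with the now-isolated vertex $v$; equivalently $G-e = (G-v) \cup E_1$ in the sense of a disjoint union with a single isolated vertex. I would then invoke the fact (which is immediate from the definition of connected set partitions, or can be phrased via the $\sqcup$ operation on partitions) that adjoining an isolated vertex multiplies the partition polynomial by $(1+x)$: indeed each connected set partition of $G-v$ extends in exactly two ways over the vertex $v$ — either $v$ forms its own singleton block (contributing a factor $x$) or, since $v$ is isolated it cannot be placed into any existing block while keeping that block's induced subgraph connected, unless that block is itself just a singleton... here I must be careful. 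Actually $v$ can only form a singleton block, because any block containing $v$ and another vertex would induce a disconnected subgraph. So the factor is... $x$, not $(1+x)$. The resolution is that $G-e$ is not quite a disjoint union in the relevant sense: one should instead note $q_i(G-e)$ relates to $q_i(G-v)$ and $q_{i-1}(G-v)$ only if $v$ can also be absorbed — which it cannot. So the cleaner route is to observe directly that every connected set partition of $G-e$ must have $\{v\}$ as a block, hence $Q(G-e,x) = x\,Q(G-v,x)$, giving $Q(G,x) = Q(G-v,x) + x\,Q(G-v,x) = (1+x)Q(G-v,x)$.

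The main obstacle, then, is precisely this bookkeeping around the isolated vertex: one must not conflate "adding an isolated vertex to a graph whose partition polynomial we multiply by $(1+x)$" with the actual behavior here. The correct observation is that in $G-e$ the vertex $v$ is isolated, so in every connected set partition it appears as a singleton, contributing the clean factor $x$; combined with the $Q(G-v,x)$ term coming from the contraction, the two sum to $(1+x)Q(G-v,x)$. Everything else is routine: verifying $e$ is a bridge (immediate from $\deg v = 1$) and verifying $G/e \cong G-v$ (immediate since $e$ is the only edge at $v$).
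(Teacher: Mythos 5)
Your proof is correct and follows exactly the route the paper intends: the edge at the degree-one vertex is a bridge, Lemma~\ref{lemma_bridge} gives $Q(G,x)=Q(G/e,x)+Q(G-e,x)$, and the identifications $G/e\cong G-v$ and $Q(G-e,x)=x\,Q(G-v,x)$ (since the isolated vertex $v$ must form a singleton block) yield the factor $(1+x)$. Your mid-proof self-correction about the isolated vertex contributing $x$ rather than $(1+x)$ lands on the right resolution, so the final argument is sound.
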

Corollary \ref{cor_arti} can be applied to compute the partition polynomial of a tree $T_n$ with $n$ vertices; the result is
\begin{align*}
Q\left(  T_{n},x\right)  =\left(  1+x\right)  ^{n-1}x.
\end{align*}
The following decomposition formula is the basis in order to derive the
partition polynomial of an arbitrarily given graph. Let $G=(V,E)$ be a graph and $W\subseteq V$ a vertex subset, then $G-W$ denotes the graph obtained from $G$ by removing all vertices of $W$.

\begin{thmenv}
\label{theo_decom}Let $G=\left(  V,E\right)  $ be a graph and
$v\in V$, then
\begin{align*}
Q\left(  G,x\right)  =x\sum_{\substack{\left\{  v\right\}  \subseteq
W\subseteq V \\G\left[  W\right]  \text{ is conn.}}}Q\left(  G-W,x\right)  .
\end{align*}
Here the sum is taken over all vertex induced connected subgraphs that contain $v$.
\end{thmenv}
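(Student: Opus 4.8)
The plan is to set up a bijection between $\Pi_c(G)$ and the set of pairs $(W,\sigma)$, where $W$ is a vertex set with $\{v\}\subseteq W\subseteq V$ and $G[W]$ connected, and $\sigma\in\Pi_c(G-W)$, and then read off the identity by keeping track of the number of blocks.

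First I would note that in any connected set partition $\pi\in\Pi_c(G)$ the vertex $v$ lies in exactly one block, which I call $W=W(\pi)$; since $\pi$ is a connected set partition, $G[W]$ is connected, and $\{v\}\subseteq W\subseteq V$. The remaining blocks $\pi\setminus\{W\}$ form a set partition of $V\setminus W$, and I would verify that it is in fact a connected set partition of $G-W=G[V\setminus W]$: for each remaining block $V_i\subseteq V\setminus W$ the subgraph of $G-W$ induced by $V_i$ is literally the same graph as $G[V_i]$, which is connected because $\pi\in\Pi_c(G)$. Conversely, given such a $W$ and any $\sigma\in\Pi_c(G-W)$, the family $\{W\}\cup\sigma$ is a connected set partition of $G$ whose block containing $v$ is precisely $W$. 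These two maps are mutually inverse, so $\pi\mapsto\bigl(W(\pi),\,\pi\setminus\{W(\pi)\}\bigr)$ is a bijection.

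Next I would dispose of the boundary case $W=V$: then $G-W$ is the null graph $\emptyset$ with $\Pi_c(\emptyset)=\{\emptyset\}$ and $Q(\emptyset,x)=1$, and the corresponding partition of $G$ is $\hat{1}=\{V\}$, which is connected exactly when $G$ is connected — matching precisely when $W=V$ is admitted into the sum. Under the bijection one has $|\pi|=1+|\pi\setminus\{W\}|=1+|\sigma|$, hence $x^{|\pi|}=x\cdot x^{|\sigma|}$. Summing over all $\pi\in\Pi_c(G)$ and grouping according to the value of $W=W(\pi)$ then yields
\begin{align*}
Q(G,x)=\sum_{\pi\in\Pi_c(G)}x^{|\pi|}
=\sum_{\substack{\{v\}\subseteq W\subseteq V\\ G[W]\text{ conn.}}}\;\sum_{\sigma\in\Pi_c(G-W)}x\cdot x^{|\sigma|}
=x\sum_{\substack{\{v\}\subseteq W\subseteq V\\ G[W]\text{ conn.}}}Q(G-W,x),
\end{align*}
which is the claimed decomposition.

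Since the whole argument is bookkeeping, there is no deep obstacle; the one point that needs genuine (if brief) care is the closure step — checking that deleting the block $W$ from a connected set partition of $G$ really does leave a connected set partition of $G-W$, which relies on the observation that for $V_i\subseteq V\setminus W$ the induced subgraph $(G-W)[V_i]$ equals $G[V_i]$. A secondary point worth stating explicitly is the consistency of the convention $Q(\emptyset,x)=1$ with the $W=V$ term of the sum.
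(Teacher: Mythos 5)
Your proposal is correct and follows essentially the same route as the paper: decompose each connected set partition according to the block $W$ containing $v$, observe that the remaining blocks form a connected set partition of $G-W$, and sum the generating functions with the extra factor $x$ for the block $W$. Your version merely spells out the bijection and the boundary case $W=V$ (via $Q(\emptyset,x)=1$) more explicitly than the paper does.
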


\begin{proof}
Let $v\in V$ be a given vertex of $G=\left(  V,E\right)  $. For each connected set partition $\pi=\left\{  X_{1},...,X_{k}\right\}  \in\Pi_{c}\left(
G\right)  $ with $v\in X_{1}$ the induced subgraph $G\left[  X_{1}\right]
$ is connected and $\left\{  X_{2},...,X_{k}\right\}  $ is a connected set
partition of $G-X_{1}$. All connected set partitions that include $X_{1}$ are
counted by $Q\left(  G-X_{1},x\right)  $. The blocks of a connected set
partition $\pi=\left\{  X_{1},...,X_{k}\right\}  \in\Pi_{c}\left(
G\right)  $ can always be renumbered in such a way that $v\in X_{1}$ is
valid. Consequently, $x~Q\left(  G-X_{1},x\right)  $ is the ordinary
generating function for the number of connected set partitions of $G$ that have a block $X_{1}$ containing $v$. Choosing the block containing $v$ in every
possible way (such that the induced subgraph is connected) gives the desired result.
\end{proof}

The application of Theorem \ref{theo_decom} to the partition polynomial of a
cycle $C_{n}$ yields%
\begin{align*}
Q\left(  C_{n},x\right)   & =x\sum_{k=1}^{n-1}k~Q\left(  P_{n-k},x\right)
+x\\
& =x^{2}\sum_{k=1}^{n-1}k\left(  1+x\right)  ^{n-k-1}+x\\
& =\left(  1+x\right)  ^{n}-1-\left(  n-1\right)  x.
\end{align*}
The application of Theorem \ref{theo_decom} requires the summation over all
connected subgraphs containing $v$, which results in an exponentially
increasing computational effort. The following recursion includes only the
neighborhood of a vertex. However, in this case, we need the principle of
inclusion and exclusion.

\begin{thmenv}
\label{theo_pie}For each subset $X\subseteq V,$ let $G/X$ be the graph obtained
from $G=\left(  V,E\right)  $ by merging all vertices of $X$ into a single
vertex. Possibly arising parallel edges are replaced by single edges. Then the
following equation is valid for each vertex $v\in V$:%
\begin{align*}
Q\left(  G,x\right)  =xQ\left(  G-v,x\right)  +\sum_{\emptyset\subset
W\subseteq N\left(  v\right)  }\left(  -1\right)  ^{\left\vert W\right\vert
+1}Q\left(  G/{W\cup\left\{  v\right\}},x\right)  .
\end{align*}
\end{thmenv}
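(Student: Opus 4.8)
The plan is to split $\Pi_{c}(G)$ according to the block containing $v$ and then apply the principle of inclusion and exclusion over the neighbours of $v$. First I would separate the connected set partitions of $G$ into those in which $\{v\}$ is a singleton block and those in which $v$ lies in a block of size at least two. If $\{v\}$ is a singleton block of $\pi\in\Pi_{c}(G)$, then deleting this block leaves a set partition of $V\setminus\{v\}$ all of whose blocks induce connected subgraphs of $G$, hence of $G-v$ (since $v$ lies in none of them); conversely any $\sigma\in\Pi_{c}(G-v)$ together with the block $\{v\}$ gives an element of $\Pi_{c}(G)$. This correspondence adds exactly one block, so the partitions of the first kind contribute $x\,Q(G-v,x)$ to $Q(G,x)$.

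For the second kind, the key observation is that if $v$ lies in a block $X$ with $G[X]$ connected and $|X|\ge 2$, then $X$ contains at least one neighbour of $v$. For a nonempty $W\subseteq N(v)$ let $\mathcal{A}_{W}$ be the set of $\pi\in\Pi_{c}(G)$ whose block containing $v$ also contains all of $W$; then the partitions of the second kind form $\bigcup_{w\in N(v)}\mathcal{A}_{\{w\}}$, and $\bigcap_{w\in W}\mathcal{A}_{\{w\}}=\mathcal{A}_{W}$. The next step is to show that the weighted count $\sum_{\pi\in\mathcal{A}_{W}}x^{|\pi|}$ equals $Q(G/(W\cup\{v\}),x)$: contracting $W\cup\{v\}$ to a single vertex is a bijection between the connected set partitions of $G$ in which $W\cup\{v\}$ lies inside one block and the connected set partitions of $G/(W\cup\{v\})$, and it preserves the number of blocks (the contracted vertex lies in one block). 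The nontrivial direction is that un-contracting a connected set partition of $G/(W\cup\{v\})$ yields blocks that induce connected subgraphs of $G$; here one uses that $G[W\cup\{v\}]$ is connected (it is a star centred at $v$, as $W\subseteq N(v)$), so any path through the contracted vertex in the quotient lifts to a path inside the corresponding block of $G$.

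Finally I would apply inclusion and exclusion to the family $\{\mathcal{A}_{\{w\}}\}_{w\in N(v)}$, but weighting each partition $\pi$ by $x^{|\pi|}$ instead of counting it by $1$; since for each $\pi$ in the union the set of $w$ with $\pi\in\mathcal{A}_{\{w\}}$ is nonempty and $\sum_{\emptyset\subset W\subseteq S}(-1)^{|W|+1}=1$ for any nonempty $S$, this gives
\begin{align*}
\sum_{\pi\in\bigcup_{w}\mathcal{A}_{\{w\}}}x^{|\pi|}
&=\sum_{\emptyset\subset W\subseteq N(v)}(-1)^{|W|+1}\sum_{\pi\in\mathcal{A}_{W}}x^{|\pi|}\\
&=\sum_{\emptyset\subset W\subseteq N(v)}(-1)^{|W|+1}Q(G/(W\cup\{v\}),x).
\end{align*}
Adding the contribution $x\,Q(G-v,x)$ of the first kind yields the claimed identity. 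I expect the main obstacle to be the careful verification that contraction and un-contraction give a block-number–preserving bijection onto $\Pi_{c}(G/(W\cup\{v\}))$ — in particular the connectivity of the lifted blocks — together with setting up the inclusion–exclusion correctly on the weighted generating function rather than on plain cardinalities.
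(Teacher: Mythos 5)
Your proposal is correct and follows essentially the same route as the paper's proof: split off the partitions in which $\{v\}$ is a singleton (giving $xQ(G-v,x)$), identify $Q(G/(W\cup\{v\}),x)$ as the generating function of the connected set partitions whose $v$-block contains $W$, and apply inclusion--exclusion over the nonempty subsets of $N(v)$. Your write-up merely makes explicit what the paper leaves as a sketch, namely the block-preserving contraction bijection and the weighted inclusion--exclusion identity $\sum_{\emptyset\subset W\subseteq S}(-1)^{|W|+1}=1$, which the paper instead handles by induction on $|N(v)|$.
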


\begin{proof}
The first term on the right hand side of the equation counts all connected set
partitions of $G$ that contain the singleton $\left\{  v\right\}  $. All
remaining connected set partitions of $G$ have the property that $v$ and at least one neighbor vertex of $v$ are together in a block. Let $u\in N\left(
v\right)  $, then $Q\left(  G/{\left\{  u,v\right\}  },x\right)  $ is the
ordinary generating function for the number of connected set partitions of $G$
containing a block $B$ with $\left\{  u,v\right\}  \subseteq B$. However, if
analogously the connected set partitions of $G$ that have a block $B^{\prime}$ with $\left\{v,w\right\}  \subseteq B^{\prime}$ are counted by $Q\left(  G/{\left\{  v,w\right\}  },x\right)  $ with $w\in N\left(  v\right)  $, $w\neq u$, then all connected set partitions with a block containing $\left\{  u,v,w\right\} $ are counted twice. Hence it is necessary to subtract $Q\left(  G/{\left\{u,v,w\right\}  },x\right)  $. By induction on the number of vertices in $N\left(  v\right)  $ the inclusion-exclusion representation as
stated in the theorem is obtained.
\end{proof}

The Theorem~\ref{thm:generalCutSetFormula} is a generalization of Theorem~\ref{theo_pie} and also contains Lemma~\ref{lemma_bridge} as a special case.
\begin{thmenv}\label{thm:generalCutSetFormula}
For an edge subset $F\subseteq E$ let $G/S$ the graph obtained from $G=(V,E)$ by contracting all edges of $S$ in $G$. If $S\subseteq E$ is a cut of $G$, then
\begin{align}
Q\left(  G-S,x\right)&=\sum_{F\subseteq S}\left(  -1\right)  ^{\left\vert
F\right\vert }Q\left(  G/S,x\right) \label{cut_formula}%
\end{align}
holds.
\end{thmenv}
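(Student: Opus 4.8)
The plan is to expand both sides as sums over the lattice $\Pi_c(G)$ and to compare them term by term; the hypothesis that $S$ is a cut will enter only at the very last step. (I read the $F$-th summand on the right as $(-1)^{|F|}\,Q(G/F,x)$, where $G/F$ denotes $G$ with all edges of $F$ contracted.) The starting point is the identity, valid for every $F\subseteq E$,
\[
Q(G/F,x)=\sum_{\substack{\pi\in\Pi_c(G)\\ \{(V,F)\}\le\pi}}x^{|\pi|},
\]
which is immediate from the description of $\Pi_c(G)$ as a lattice of contractions: under the obvious block-preserving bijection, the connected set partitions of $G/F$ are exactly the connected set partitions $\pi$ of $G$ with $\{(V,F)\}\le\pi$, because a block of such a $\pi$ is a union of components of $(V,F)$ and a union of components of $(V,F)$ induces a connected subgraph of $G$ iff it induces one of $G/F$.

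Writing $E_\pi$ for the set of edges of $G$ having both ends in a single block of $\pi$ (the intra-block edges of $\pi$), the condition $\{(V,F)\}\le\pi$ is the same as $F\subseteq E_\pi$. Substituting into the right-hand side and interchanging the two summations,
\[
\sum_{F\subseteq S}(-1)^{|F|}Q(G/F,x)=\sum_{\pi\in\Pi_c(G)}x^{|\pi|}\sum_{F\subseteq S\cap E_\pi}(-1)^{|F|}=\sum_{\substack{\pi\in\Pi_c(G)\\ S\cap E_\pi=\emptyset}}x^{|\pi|},
\]
since $\sum_{F\subseteq T}(-1)^{|F|}$ equals $1$ for $T=\emptyset$ and $0$ otherwise. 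Thus the theorem reduces to the identity of sets $\{\pi\in\Pi_c(G):S\cap E_\pi=\emptyset\}=\Pi_c(G-S)$.

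The inclusion ``$\subseteq$'' uses nothing about cuts: if no edge of $S$ is intra-block for $\pi$ then $(G-S)[B]=G[B]$ for every block $B$ of $\pi$, so each block still induces a connected subgraph of $G-S$. The reverse inclusion is the only substantial point, and the sole place the hypothesis is needed — I expect it to be the main obstacle. Let $\pi\in\Pi_c(G-S)$. Since $(G-S)[B]\subseteq G[B]$ for every block $B$, each block also induces a connected subgraph of $G$, so $\pi\in\Pi_c(G)$, and it remains to show $S\cap E_\pi=\emptyset$. Suppose not: some $e=\{u,v\}\in S$ has $u$ and $v$ in a common block $B$ of $\pi$. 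Because $S$ is a cut of $G$, there is a set $A$ with $\emptyset\neq A\neq V$ such that $S$ consists exactly of the edges of $G$ with one end in $A$ and the other in $V\setminus A$; hence $u$ and $v$ lie on opposite sides of $\{A,V\setminus A\}$, and since $G-S$ has no edge crossing that partition, $u$ and $v$ lie in different components of $G-S$. But $B$ is connected in $G-S$, hence contained in a single component of $G-S$ — a contradiction. This gives ``$\supseteq$'', and therefore $\sum_{F\subseteq S}(-1)^{|F|}Q(G/F,x)=\sum_{\pi\in\Pi_c(G-S)}x^{|\pi|}=Q(G-S,x)$. (As announced, $S=\{e\}$ with $e$ a bridge recovers Lemma~\ref{lemma_bridge}, and $S$ equal to the set of edges incident to a vertex $v$ — so that $G-S$ is $G$ with $v$ isolated and a nonempty $F\subseteq S$ merges $v$ with the other endpoints of the edges of $F$ — recovers Theorem~\ref{theo_pie}.)
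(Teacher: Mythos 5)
Your proof is correct and takes essentially the same route as the paper: both rest on the key identification that $Q(G/F,x)$ is the generating function of those connected set partitions of $G$ in which every edge of $F$ is intra-block, followed by inclusion--exclusion over the subsets $F\subseteq S$ (the paper packages this as M\"obius inversion on the subset lattice via auxiliary polynomials $R_F$, while you simply interchange the two summations --- the same computation), and you correctly read the misprinted $Q(G/S,x)$ in the statement as $Q(G/F,x)$. If anything, your write-up is more complete at the final step, since you prove both inclusions of $\{\pi\in\Pi_c(G):\ \text{no edge of } S \text{ is intra-block}\}=\Pi_c(G-S)$ and make explicit that the cut hypothesis (every edge of $S$ joins distinct components of $G-S$, as holds for an edge boundary or a minimal cutset) is needed exactly for the inclusion ``$\supseteq$'', a point the paper's proof passes over tacitly.
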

\begin{proof}
Let $F$ be a subset of the given cut $S$ and let $q_{i}\left(  G,F\right)  $
be the number of connected set partitions of $G$ with exactly $i$ blocks such that
the end vertices of any edge of $F$ are contained completely in one block. We
define the polynomial%
\begin{align*}
Q_{F}\left(  G,x\right)&=\sum_{i=1}^{n}q_{i}\left(  G,F\right)  x^{i}.
\end{align*}
If $e=\left\{  u,v\right\}  $ is an edge of $G$ then the polynomials
$Q_{\left\{  e\right\}  }\left(  G,x\right)  $ and $Q\left(  G/e,x\right)  $
coincide, since in both cases we count only connected set partitions of $G$ having
$u$ and $v$ in one block. The generalization of this statement yields%
\begin{align*}
Q_{F}\left(  G,x\right)&=Q\left(  G/F,x\right)
\end{align*}
for all $F\subseteq S$. Let $r_{i}\left(  G,F\right)  $ be the number of
connected set partitions $\pi\in\Pi_c\left(  G\right)  $ with $\left\vert
\pi\right\vert =i$ such that the end vertices of any edge of $F$ are contained
completely in one block \emph{and} such that no two end vertices of any edge
of $S\setminus F$ belong to one and the same block of $\pi$. We consider the
generating function for these number sequence, i.e.%
\begin{align*}
R_{F}\left(  G,x\right)&=\sum_{i=1}^{n}r_{i}\left(  G,F\right)  x^{i}.
\end{align*}
For any subset $F\subseteq S$, we conclude%
\begin{align*}
Q\left(  G/F,x\right)&=\sum_{A\supseteq F}R_{A}\left(  G,x\right)  ,
\end{align*}
which yields via M\"{o}bius inversion%
\begin{align*}
R_{F}\left(  G,x\right)&=\sum_{A\supseteq F}\left(  -1\right)  ^{\left\vert
A\right\vert -\left\vert F\right\vert }Q\left(  G/A,x\right)  .
\end{align*}
The polynomial $R_{\emptyset}\left(  G,x\right)  $ counts all connected set
partitions of $G$ that have no edge of $S$ as a subset of a block. Hence each
block of a set partition counted by $R_{\emptyset}\left(  G,x\right)  $ is
completely contained in one component of $G-S$, which gives
\begin{align*}
Q\left(  G-S,x\right)&=R_{\emptyset}\left(  G,x\right)  =\sum_{A\subseteq
S}\left(  -1\right)  ^{\left\vert A\right\vert }Q\left(  G/A,x\right). \qedhere
\end{align*}
\end{proof}
It is possible to state the partition polynomial of the complete bipartite graph $K_{s,t}$ in a closed form by the following theorem.
\begin{thmenv}
The partition polynomial of the complete bipartite graph is%
\begin{align}
Q\left( K_{s,t},x\right) =\sum_{i=0}^{s}\sum_{j=0}^{s-i}\sum_{k=0}^{t-j}%
\dbinom{s}{i}
\dbinom{t}{k}
S(s-i,j)
S(t-k,j)j!
x^{i+j+k}. 
\end{align}
\end{thmenv}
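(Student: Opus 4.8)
The plan is to enumerate the connected set partitions of $K_{s,t}$ directly, classifying them by the ``shape'' of their blocks. Write $V=S\cup T$ for the vertex bipartition with $|S|=s$, $|T|=t$. The first step is the structural observation that, in $K_{s,t}$, a subset $B\subseteq V$ induces a connected subgraph if and only if $|B|=1$ or $B$ meets both $S$ and $T$: a block of size at least two contained entirely in $S$ (or entirely in $T$) spans no edge, whereas a block meeting both sides induces a complete bipartite, hence connected, subgraph. Call the latter a \emph{mixed} block. Thus every $\pi\in\Pi_c(K_{s,t})$ consists of some singletons from $S$, some singletons from $T$, and some mixed blocks.

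Next I would stratify $\Pi_c(K_{s,t})$ by three parameters: the number $i$ of singleton blocks contained in $S$, the number $k$ of singleton blocks contained in $T$, and the number $j$ of mixed blocks. A partition with these parameters has $i+j+k$ blocks, hence contributes $x^{i+j+k}$ to $Q(K_{s,t},x)$, so it remains to count the partitions with prescribed $(i,j,k)$. The $i$ singleton vertices can be chosen inside $S$ in $\binom{s}{i}$ ways and the $k$ singleton vertices inside $T$ in $\binom{t}{k}$ ways. The remaining $s-i$ vertices of $S$ have to be split into the nonempty $S$-parts of the $j$ mixed blocks, i.e.\ set-partitioned into exactly $j$ blocks, which happens in $S(s-i,j)$ ways; symmetrically, the remaining $t-k$ vertices of $T$ are split into the $j$ nonempty $T$-parts in $S(t-k,j)$ ways.

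The only step needing care is how the $S$-parts and $T$-parts are combined into mixed blocks. Given a connected set partition whose mixed blocks are $A_1\cup B_1,\dots,A_j\cup B_j$ with $\emptyset\neq A_\ell\subseteq S$ and $\emptyset\neq B_\ell\subseteq T$, one recovers the (unordered) set partition $\{A_1,\dots,A_j\}$ of the non-singleton part of $S$, the set partition $\{B_1,\dots,B_j\}$ of the non-singleton part of $T$, and the bijection pairing each $A_\ell$ with its $B_\ell$; conversely these three data reconstruct the mixed blocks, and two different bijections give different families of blocks because the $A_\ell$ are pairwise disjoint and nonempty. Hence there are exactly $j!$ ways to match the two $j$-element families of parts. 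Multiplying, the number of connected set partitions with parameters $(i,j,k)$ is $\binom{s}{i}\binom{t}{k}S(s-i,j)S(t-k,j)\,j!$, and summing over all admissible $(i,j,k)$ gives the asserted polynomial; note that $\sum_{j=0}^{s-i}$ and $\sum_{k=0}^{t-j}$ already enforce $j\le s-i$ and $j\le t-k$, precisely the conditions under which the Stirling factors can be nonzero, so no contributing term is omitted or double-counted. A sanity check — $j=0$ forces $i=s$ and $k=t$, recovering the $x^{s+t}$ term of $\hat 0$, while $i=k=0$, $j=1$ gives the $x$ term since $K_{s,t}$ is connected — confirms the bookkeeping. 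The argument is elementary throughout; the subtlest point is the $j!$ matching factor, together with checking that the degenerate cases $j=0$ and $s-i<j$ or $t-k<j$ behave correctly.
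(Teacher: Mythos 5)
Your proposal is correct and follows essentially the same route as the paper: choose the $i$ and $k$ singletons, partition the remaining vertices of each side into $j$ parts counted by the Stirling numbers, and match the parts across the bipartition in $j!$ ways, with $x^{i+j+k}$ recording the block count. Your explicit justification of the structural fact (a block is connected iff it is a singleton or meets both sides) and of the bijectivity of the matching step only makes the paper's argument more precise; nothing differs in substance.
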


\begin{proof}
Assume the vertex set of the complete bipartite graph $K_{s,t}$ is $S\cup T$
such that $\left\vert S\right\vert =s$ and $\left\vert T\right\vert =t$ and
each edge of $K_{s,t}$ links a vertex of $S$ with a vertex of $T$. First we
select a vertex subset $X$, $X\subseteq S$ of size $i$ and a vertex subset $Y
$, $Y\subseteq T$ of size $k$. There are $\dbinom{s}{i}\dbinom{t}{k}$
possibilities for this selection. These vertices form singletons of the
connected partition. The remaining $s-i$ vertices of $S$ are partitioned
into $j$ blocks. A second partition with $j$ blocks is generated out of $%
T\setminus Y$. These partitions are counted by the Stirling numbers of the
second kind, more precise by the product $S(s-i,j)S(t-k,j)$. We form a bipartite graph with exactly $j$ components and
without isolated vertices with the vertex set $\left( S\setminus X\right)
\cup \left( T\setminus Y\right) $. The vertex set of one component of the
bipartite graph consists of one block of a partition of $S\setminus X$ and
one block of a partition of $T\setminus Y$. These blocks can be assigned to
each other in $j!$ different ways. The number of blocks of the resulting
connected partition of $K_{s,t}$ is $i+j+k$, which is taken into account by
the power of $x$. The triple sum counts all possible distributions of
subsets and partitions.
\end{proof}

\begin{corenv}
The complete bipartite graph $K_{1,t}$ and $K_{2,t}$ satisfy, respectively,%
\begin{eqnarray*}
Q\left( K_{1,t},x\right)  &=&x\left( 1+x\right) ^{t}\text{,} \\
Q\left( K_{2,t},x\right)  &=&x\left[ \left( 1+x\right) ^{t}-x^{t}\right]
+x^{2}\left[ \left( 2+x\right) ^{t}-x^{t}\right] +x^{2+t}.
\end{eqnarray*}
\end{corenv}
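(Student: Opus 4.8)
The plan is to obtain both identities by specialising the triple-sum formula for $Q(K_{s,t},x)$ proved just above to $s=1$ and $s=2$, and then collapsing the resulting finite sums with the elementary facts $S(m,0)=\delta_{m,0}$, $S(m,1)=1$ for $m\ge 1$, and $S(m,2)=2^{m-1}-1$ for $m\ge 1$, together with the binomial theorem.

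For $s=1$ the index $i$ runs over $\{0,1\}$ and $j$ over $\{0,\dots,1-i\}$; since $S(1,0)=0$, the only nonvanishing pairs are $(i,j)=(0,1)$ and $(i,j)=(1,0)$. First I would write out these two contributions: the pair $(0,1)$ gives $\sum_{k=0}^{t-1}\binom{t}{k}x^{1+k}$ after using $S(1,1)=1$ and $S(t-k,1)=1$, while $(1,0)$ gives only $x^{1+t}$ because $S(t-k,0)=\delta_{t-k,0}$ forces $k=t$. Adding the two and completing the binomial expansion gives $x(1+x)^t$. (Equivalently one could note that $K_{1,t}$ is a tree on $t+1$ vertices and invoke $Q(T_n,x)=(1+x)^{n-1}x$ from the discussion after Corollary~\ref{cor_arti}.)

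For $s=2$ the same reasoning (now $S(2,0)=S(1,0)=0$) leaves exactly the four pairs $(i,j)\in\{(0,1),(0,2),(1,1),(2,0)\}$, using $S(2,1)=S(2,2)=1$, $S(1,1)=1$, $S(0,0)=1$, $2!=2$. The pair $(2,0)$ again collapses to $x^{2+t}$, and $(0,1)$ gives $\sum_{k=0}^{t-1}\binom{t}{k}x^{1+k}=x[(1+x)^t-x^t]$. For the two middle pairs I would substitute $S(t-k,2)=2^{t-k-1}-1$ into $(0,2)=2\sum_{k=0}^{t-2}\binom{t}{k}S(t-k,2)x^{2+k}$ and write $(1,1)=2\sum_{k=0}^{t-1}\binom{t}{k}x^{2+k}$ (with $S(t-k,1)=1$ throughout its range), so that the $-1$ coming from $S(t-k,2)$ cancels all of $(1,1)$ except its top term $2\binom{t}{t-1}x^{1+t}=2t\,x^{1+t}$; this leftover is precisely the $k=t-1$ term $\binom{t}{t-1}2^{1}x^{2+(t-1)}$ needed to promote $\sum_{k=0}^{t-2}\binom{t}{k}2^{t-k}x^{2+k}$ into $\sum_{k=0}^{t-1}\binom{t}{k}2^{t-k}x^{2+k}=x^{2}[(2+x)^t-x^t]$. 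Summing $(0,1)+(0,2)+(1,1)+(2,0)$ then yields the stated closed form.

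I expect the only real difficulty to be the index bookkeeping in the $s=2$ case: one must track which boundary terms ($k=t,\,t-1,\,t-2$) actually occur in each partial sum so that the $-1$ in $S(m,2)=2^{m-1}-1$ telescopes correctly against the $S(m,1)$ contribution, rather than over- or undershooting by a single $x^{1+t}$ term. As an independent check I would verify the $K_{2,t}$ identity combinatorially: writing $V(K_{2,t})=\{a,b\}\cup T$, a connected set partition has at most one block $A\ni a$ and one block $B\ni b$; if $A=B=\{a,b\}\cup T_0$ then $T_0\ne\emptyset$ and all other $T$-vertices are singletons, contributing $x[(1+x)^t-x^t]$, and if $A\ne B$ then $A=\{a\}\cup T_a$, $B=\{b\}\cup T_b$ with $T_a,T_b$ disjoint and the remaining $T$-vertices singletons, which by the multinomial theorem applied with weights $1,1,x$ to the three parts $(T_a,T_b,\text{rest})$ contributes $x^{2}(2+x)^t=x^{2}[(2+x)^t-x^t]+x^{2+t}$.
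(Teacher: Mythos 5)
Your proposal is correct and follows exactly the route the paper intends: the corollary is stated as an immediate consequence of the closed-form theorem for $Q(K_{s,t},x)$, and your specialisation to $s=1,2$ with the identities $S(m,0)=\delta_{m,0}$, $S(m,1)=1$, $S(m,2)=2^{m-1}-1$ and the binomial theorem carries out that computation correctly, including the delicate cancellation of the $(1,1)$-terms against the $-1$ in $S(t-k,2)$. The independent combinatorial verification of the $K_{2,t}$ formula is a nice extra check but not needed.
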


In order to state the Theorem~\ref{thm:daggerTheorem} it is necessary to introduce the notion of the extraction $G\dagger e$ of an edge $e\in E$ from a given graph $G=(V,E)$, as done in \cite{averbouchMakowskyTittmann:GraphPolynomial}. 
Assume that $e=\{u,v\}\in E$, then $G\dagger e$ denotes the graph that emerges from $G$ by removing the edge $e$ and the two endvertices $u$ and $v$ from $G$. This extraction of edges is easily generalized to arbitrary matchings $M\subseteq E$ of $G$ by successively extracting all matching edges in $M$ from $G$.
\begin{thmenv}\label{thm:daggerTheorem}
Let $G=(V,E)$ be a graph and $M\subseteq E$ a matching in $G$, so that every matching edge $e=\{u,v\}\in M$ has the property that $u$ and $v$ are connected to every other vertex $w$ in $V$. Then
\begin{align}
Q(G-M,x)&=\sum_{I\subseteq M}(-x)^{|I|}Q(G\dagger I, x)
\end{align}
holds.
\end{thmenv}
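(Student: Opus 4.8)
The plan is to reduce the statement to a single-edge identity and then to induct on the size of the matching. The single-edge identity I would aim for is the following: if $e=\{u,v\}\in E$ is an edge such that both $u$ and $v$ are adjacent to every other vertex of $G$, then
\begin{align*}
Q(G-e,x)=Q(G,x)-x\,Q(G\dagger e,x).
\end{align*}
To prove this I would compare $\Pi_c(G)$ and $\Pi_c(G-e)$ directly. Deleting $e$ can only affect the connectivity of a block containing both $u$ and $v$, and such a block $X$ remains connected in $G-e$ unless $X=\{u,v\}$, since any third vertex $w\in X$ is still adjacent to both $u$ and $v$ in $G-e$. Hence $\Pi_c(G-e)$ is exactly the set of $\pi\in\Pi_c(G)$ having no block equal to $\{u,v\}$. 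The connected set partitions of $G$ that \emph{do} have $\{u,v\}$ as a block are in bijection with $\Pi_c(G\dagger e)$ via removal of that block, a bijection decreasing the block count by one; so their contribution to $Q(G,x)$ is $x\,Q(G\dagger e,x)$, and the identity follows.

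Next I would check that the hypothesis is stable under the two operations used in the induction. If $M$ is a matching all of whose endpoints are universal and $e\in M$, then (i) the endpoints of $e$ remain universal in $G-(M\setminus\{e\})$, because no deleted edge is incident with them; and (ii) $M\setminus\{e\}$ is still a matching of $G\dagger e$ whose endpoints are universal there. I would also record the commutation $(G-M')\dagger e=(G\dagger e)-M'$ whenever $e\notin M'\subseteq M$, which holds because the two vertices extracted with $e$ and the edges of $M'$ are pairwise disjoint.

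The induction on $|M|$ is then routine. The base case $M=\emptyset$ is trivial. For the inductive step write $M=M'\cup\{e\}$ with $e\notin M'$; applying the single-edge identity to $e$ inside $G-M'$ (allowed by (i)) gives
\begin{align*}
Q(G-M,x)=Q(G-M',x)-x\,Q\bigl((G\dagger e)-M',x\bigr).
\end{align*}
Applying the induction hypothesis to $M'$ in $G$ and to $M'$ in $G\dagger e$ (allowed by (ii)), together with $(G\dagger e)\dagger I=G\dagger(I\cup\{e\})$, yields
\begin{align*}
Q(G-M,x)=\sum_{I\subseteq M'}(-x)^{|I|}Q(G\dagger I,x)+\sum_{I\subseteq M'}(-x)^{|I|+1}Q\bigl(G\dagger(I\cup\{e\}),x\bigr).
\end{align*}
The first sum ranges over the subsets of $M$ that avoid $e$ and the second over those that contain $e$, so together they give $\sum_{I\subseteq M}(-x)^{|I|}Q(G\dagger I,x)$, as required. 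I expect the only genuinely delicate point to be the single-edge identity — precisely the observation that $\{u,v\}$ is the \emph{only} block whose induced connectivity is disturbed by deleting $e$, which is exactly where universality of $u$ and $v$ is used; everything afterward is bookkeeping and reindexing.
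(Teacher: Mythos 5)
Your proof is correct, but it takes a genuinely different route from the paper's. The paper proves the identity in one step by inclusion--exclusion over the sets $\pi_e$ of connected set partitions of $G$ containing the two-element block $\{u,v\}$ for $e=\{u,v\}\in M$; the essential work there is the verification that $\bigcap_{e\in M}\overline{\pi}_e$ coincides with $\Pi_c(G-M)$, done by a path-shortcutting argument (any path inside a block that uses a matching edge is rerouted through a universal endpoint, and two matched vertices sharing a block of size at least three are joined through a third vertex), after which the right-hand side is read off as $x^{|I|}Q(G\dagger I,x)$. You instead isolate the single-edge identity $Q(G-e,x)=Q(G,x)-x\,Q(G\dagger e,x)$, whose proof uses universality only to see that $\{u,v\}$ is the unique block disconnected by deleting $e$, and then induct on $|M|$, after checking that universality and the matching property survive deletion of the remaining matching edges and extraction of $e$, and that $(G-M')\dagger e=(G\dagger e)-M'$ for $e\notin M'$. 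Your version localizes the use of the universality hypothesis to a transparent one-edge statement and replaces the global path argument by bookkeeping (your recombination of the two sums is exactly the recursive form of inclusion--exclusion), at the price of the stability checks (i)--(ii) and the commutation lemma, all of which you verify correctly; the paper's version buys the cleaner set-level fact that the connected set partitions of $G$ avoiding all matching blocks are precisely those of $G-M$, handling all edges simultaneously. Both arguments are complete.
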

\begin{proof}
Let $e=\{u,v\}\in M$ and denote by $\pi_e$ the set of all connected set partitions of $G$ containing the two element block $\{u,v\}$. Then one has by virtue of the inclusion-exclusion principle
\begin{align}
\left|
\bigcap_{e\in M}\overline{\pi}_e
\right|&=
\sum_{I\subseteq M}
(-1)^{|I|}\left|\bigcap_{e\in I}\pi_e\right|,
\end{align}
where the universal set is given by the set of all connected set partitions in $G$.

For the right hand side note that the generating function of the connected set partitions in $\bigcap_{e\in I}\pi_e$ is given by $x^{|I|}Q(G\dagger I, x)$, as the pairwise disjoint two element blocks created by the matching edges $e\in I$ are yielding $x^{|I|}$ extra blocks, whereas $G\dagger I$ accounts for the removal of these two element blocks. 

For the left hand side one has to show that $\pi\in\bigcap_{e\in M}\overline{\pi}_e$ iff $\pi$ is a connected set partition in $G-M$. 

Let $\pi\in\bigcap_{e\in M}\overline{\pi}_e$ and $B$ a block of $\pi$. Firstly, assume that $s$ and $t$ are distinct vertices in $B$ that are not both endpoints of an matching edge in $M$. As $B$ induces a connected subgraph in $G$, there is an $s$-$t$ path $P$ in $G[B]$, that possibly uses edges of $M$. Assume that this path is given by $P=(v_0,e_0,v_1,\ldots, v_{n+1})$ with $v_0=s$ and $v_{n+1}=t$. Let $e_i=\{v_i,v_{i+1}\}$ be the possible first matching edge in $P$, then one can shortcut the $s$-$t$ path to $P=(v_0,\ldots, v_{i}, \{v_i, v_{n+1}\}, v_{n+1})$, as the endpoints of every matching edge are connected to every other vertex in $G$. Furthermore note that the edge $\{v_i, v_{n+1}\}$ cannot be a matching edge, as $e_i$ and $\{v_i, v_{n+1}\}$ would be incident. Hence $s$ and $t$ are also connected in the subgraph induced by $B$ in $G-M$.

Secondly, assume that $s$ and $t$ are distinct vertices in $B$ that are endpoints of an matching edge in $M$. By assumption there must be a third vertex $u$ distinct from $s$ and $t$, so that the non matching edges $\{s,u\}$ and $\{u,t\}$ exist. Hence $P=(s,\{s,u\},u,\{u,t\},t)$ is an $s$-$t$ path in the subgraph induced by $B$ in $G-M$ . 

Therefore the removal of the edges in $M$ will not destroy the connectivity of $B$ and $\pi$ is also a connected set partition in $G-M$.

Conversely suppose that $\pi$ is a connected set partition in $G-M$, then $\pi$ will not contain any block corresponding to matching edges in $M$ and adding the edges in $M$ cannot destroy the connectivity of the blocks of $\pi$, so that $\pi$ is also a connected set partition in $\bigcap_{e\in M}\overline{\pi}_e$.

By considering the generating function $Q(G-M,x)$ of the set of connected set partitions of $G-M$ the result for the left hand side follows.
\end{proof}
An application of Theorem~\ref{thm:daggerTheorem} is given by the  Corollary~\ref{cor:applicationDaggerTheorem}, where by the symmetry of the $K_n$ some simplifications are possible.
\begin{corenv}\label{cor:applicationDaggerTheorem}
Let $M_m$ be a matching with exactly $m$ edges of the complete graph $K_n$. Then
\begin{align}
Q(K_n-M_m,x)&=\sum_{i=0}^m\sum_{j=0}^{n-2i} \binom{m}{i}(-1)^iS(n-2i,j)x^{i+j}.
\end{align}
\end{corenv}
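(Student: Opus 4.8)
The plan is to apply Theorem~\ref{thm:daggerTheorem} directly to $G = K_n$ with the matching $M = M_m$, and then to exploit the high symmetry of $K_n$ to collapse the resulting sum over subsets $I \subseteq M_m$ into a sum over the \emph{size} $i = |I|$ only. First I would check the hypothesis: in $K_n$ every pair of vertices is adjacent, so for any matching edge $e = \{u,v\}$, both $u$ and $v$ are indeed connected to every other vertex of $K_n$; thus Theorem~\ref{thm:daggerTheorem} applies and gives
\begin{align*}
Q(K_n - M_m, x) = \sum_{I \subseteq M_m} (-x)^{|I|} Q(K_n \dagger I, x).
\end{align*}

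Next I would identify the graph $K_n \dagger I$ for a subset $I \subseteq M_m$ with $|I| = i$. Extracting one matching edge from $K_n$ deletes its two endpoints (and, trivially, the edge between them), and since all remaining vertices stay mutually adjacent, the result is again a complete graph on the surviving vertices. Extracting $i$ pairwise disjoint edges therefore yields $K_n \dagger I \cong K_{n-2i}$, independently of which $i$ edges of $M_m$ were chosen. Consequently all $\binom{m}{i}$ subsets $I$ of size $i$ contribute the same term $(-x)^i Q(K_{n-2i}, x)$, and the sum becomes
\begin{align*}
Q(K_n - M_m, x) = \sum_{i=0}^m \binom{m}{i} (-x)^i Q(K_{n-2i}, x).
\end{align*}

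Finally I would substitute the known formula $Q(K_{n-2i}, x) = \sum_{j=0}^{n-2i} S(n-2i, j) x^j$ from Section~4, giving
\begin{align*}
Q(K_n - M_m, x) = \sum_{i=0}^m \sum_{j=0}^{n-2i} \binom{m}{i} (-1)^i S(n-2i, j)\, x^{i+j},
\end{align*}
which is exactly the claimed identity. I do not expect any serious obstacle here: the only points needing a careful word are (i) verifying that successive edge extractions from $K_n$ really do produce $K_{n-2i}$ — in particular that no promised adjacencies are lost and no parallel edges appear, which holds because $K_n$ is complete and the matching edges are vertex-disjoint — and (ii) noting that $n - 2m \geq 0$ is implicitly required for $M_m$ to be a matching in $K_n$, so the outer sum is well-defined. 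The rest is bookkeeping with binomial coefficients and Stirling numbers.
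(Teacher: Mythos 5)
Your proposal is correct and follows exactly the route the paper intends: the corollary is stated as a direct application of Theorem~\ref{thm:daggerTheorem}, using the symmetry of $K_n$ to note that $K_n\dagger I\simeq K_{n-2i}$ for every $i$-subset $I\subseteq M_m$, collecting the $\binom{m}{i}$ equal terms, and substituting $Q(K_{n-2i},x)=\sum_{j=0}^{n-2i}S(n-2i,j)x^j$. Your extra remarks on verifying the adjacency hypothesis and the condition $n\ge 2m$ are sound bookkeeping and raise no issues.
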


\section{The splitting formula}
The aim of this section is to derive a more general computational device to compute the partition polynomial of an arbitrarily given graph $G$. The splitting formula on a separating vertex set $X$ presented in this section can be employed to construct an algorithm with polynomial running time that computes the partition polynomial for graphs with bounded treewidth.

Given a graph $G=(V,E)$ and subgraphs $G^1=(V^1,E^1)$, $G^2=(V^2,E^2)$ of G, so that $E=E^1\cup E^2$, $E^1\cap E^2=\emptyset$ and $V=V^1\cup V^2$, $V^1\cap V^2=X$ is satisfied, then $s(G)=(G^1,G^2,X)$ is a splitting of $G$. The set $X$ is a separating vertex set of $G$. 

Let $X\subseteq V$, then $\Pi_c(G, \beta)$ for $\beta\in\Pi(X)$ denotes the set of connected set partitions of $G$, that are inducing $\beta$ in $X$. Let $G_X$ be the graph emerging from $G$ by adding edges between all vertices of $X$ in $G$, so that $G_X[X]$ becomes the complete graph $K_X$ on the vertex set $X$.

If $\pi=\{V_1, \ldots, V_k\}\in\Pi(V)$, then $\pi$ induces the connected set partition 
\begin{align*}
\{G[\pi]\}=\bigcup_{j=1}^k \{G[V_j]\}
\end{align*}
in $G$ by the connected components of the subgraphs $G[V_j]$ induced by the blocks $V_j$. It is $\{G[\pi]\}=\pi$ iff $\pi$ is a connected set partition of $G$. Note that this definition also implies the inequality $\{G[\pi]\}\le\pi$ for all $\pi\in\Pi(V)$. Consider the set partition $\pi=\{\{a,b,e,f\},\{c,d\}\}$ and the graph depicted in Figure~\ref{fig:exampleGraph} on page~\pageref{fig:exampleGraph} then $\{G[\pi]\}=\{\{a,b\},\{e,f\},\{c,d\}\}$ as shown in Figure~\ref{fig:inducedPartitionGraph}.
\begin{figure}[h]
\begin{center}
\includegraphics[scale=0.6]{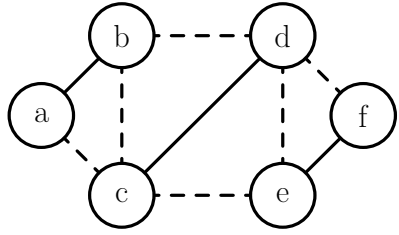}
\caption{The connected set partition $\{G[\pi]\}$ induced by $\pi$ \label{fig:inducedPartitionGraph}}
\end{center}
\end{figure}
Note that the block $\{a,b,e,f\}\in\pi$ splits into the blocks $\{a,b\}$ and $\{e,f\}$, as the former block does not induce a connected subgraph in the underlying graph $G$.

Let $G=(V,E)$ be a graph, $X\subseteq V$ and $\gamma,\beta\in\Pi(X)$, then 
$T(G,\beta,\gamma)$ is defined as the set
\begin{align}\label{def:Tset}
T(G,\beta,\gamma)&=\{\pi\in\Pi_c(G_X,\beta): \{G[\pi]\}\sqcap X=\gamma\}
\end{align} 
and the associated counting polynomial $T(G,\beta,\gamma;x)$ as
\begin{align}\label{def:TPolynomials}
T(G,\beta,\gamma;x)&=\sum_{\pi\in T(G,\beta,\gamma)}x^{|\pi|}.
\end{align}

Observe that if not $\gamma\le\beta$ holds one has $T(G,\beta,\gamma)=\emptyset$ and therefore also $T(G,\beta,\gamma;x)=0$. Furthermore note that in the case of $X=\emptyset$ it is 
\begin{align*}
Q(G,x)&=T(G,\emptyset,\emptyset;x).
\end{align*}

It is notable that the connected set partitions of a graph $G=(V,E)$ can be partitioned into disjoint sets by considering the set partitions $\beta$ that they are inducing in $X\subseteq V$. Therefore the $T(G,\beta,\beta;x)$ polynomials can be used to determine the partition polynomial by the sum
\begin{align}\label{eqn:calculatingTheQPolynomialsByAddingRestrictedTPolynomials}
Q(G,x)&=\sum_{\beta\in\Pi(X)} T(G,\beta,\beta;x).
\end{align}
The central Lemma~\ref{lemma:GlueingLemma} shows how a connected set partition in $G=(V,E)$ is assembled, given a splitting $s(G)=(G^1,G^2,X)$ of $G$ and the connected set partitions in the subgraphs $G^1$ and $G^2$.

\begin{lemenv}\label{lemma:GlueingLemma}
Let $G=(V,E)$ be a graph and $s(G)=(G^1,G^2,X)$ a splitting of $G$. 

Let $\sigma\in\Pi_c(G_X,\beta)$ and $\sigma^1=\sigma \sqcap V^1$, $\sigma^2=\sigma\sqcap V^2$, respectively. Then $\sigma^1\in\Pi_c(G_X^1,\beta)$ and $\sigma^2\in\Pi_c(G_X^2,\beta)$ holds, satisfying $\sigma=\sigma^1\sqcup\sigma^2$ and $\{G[\sigma]\}=\{G^1[\sigma^1]\}\sqcup \{G^2[\sigma^2]\}$.

Conversely let $\sigma^1\in\Pi_c(G^1_X,\beta)$ and $\sigma^2\in\Pi_c(G^2_X,\beta)$, then $\sigma=\sigma^1\sqcup\sigma^2\in\Pi_c(G_X,\beta)$ satisfying $\{G[\sigma]\}=\{G^1[\sigma^1]\}\sqcup \{G^2[\sigma^2]\}$.
\end{lemenv}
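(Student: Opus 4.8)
The plan is to deduce the Lemma from a few structural observations about the splitting together with one purely combinatorial statement about the join $\sqcup$ of set partitions, so that each of the four assertions becomes a routine inspection of blocks. First I would record the geometry of a splitting. Since $E^1\subseteq(V^1)^{(2)}$, $E^2\subseteq(V^2)^{(2)}$ and $V^1\cap V^2=X$, every edge of $G$, hence of $G_X$, has both endpoints in $V^1$ or both in $V^2$; in particular no edge of $G_X$ joins $V^1\setminus X$ to $V^2\setminus X$, so $X$ separates the two sides of $G_X$. A short edge count also shows $G^1_X=G_X[V^1]$ and $G^2_X=G_X[V^2]$ (an $E^2$-edge that happens to lie inside $X$ is absorbed by the clique on $X$), and trivially $G^1_X,G^2_X$ are subgraphs of $G_X$ while $G^1,G^2$ are subgraphs of $G$. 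Finally, $\sqcap$ is ``associative'': $(\pi\sqcap V^1)\sqcap X=\pi\sqcap X$ whenever $X\subseteq V^1$.

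The combinatorial core is a description of $\sigma^1\sqcup\sigma^2$ when $\sigma^1\in\Pi(V^1)$ and $\sigma^2\in\Pi(V^2)$ both induce $\beta=\{Y_1,\dots,Y_t\}$ on $X$. Because restricting a partition produces pairwise disjoint pieces, each block of $\sigma^1$ meets $X$ in exactly one block of $\beta$ or not at all, and likewise for $\sigma^2$; so in the overlap graph on the blocks of $\sigma^1$ and of $\sigma^2$ the only overlaps are between the unique $A_j\in\sigma^1$ with $A_j\cap X=Y_j$ and the unique $D_j\in\sigma^2$ with $D_j\cap X=Y_j$. Hence the blocks of $\sigma:=\sigma^1\sqcup\sigma^2$ are precisely the sets $A_j\cup D_j$ $(1\le j\le t)$ together with the blocks of $\sigma^1$ lying inside $V^1\setminus X$ and the blocks of $\sigma^2$ lying inside $V^2\setminus X$; in particular $\sigma\sqcap X=\beta$, $\sigma\sqcap V^1=\sigma^1$ and $\sigma\sqcap V^2=\sigma^2$.

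Now the two directions. For the forward part, $\sigma^i\sqcap X=\beta$ is immediate from the associativity of $\sqcap$. To see $\sigma^1\in\Pi_c(G^1_X)$, take a block $B$ of $\sigma$, put $B^1=B\cap V^1\neq\emptyset$, and join two vertices of $B^1$ by a path in $G_X[B]$; by the geometry of the splitting every maximal stretch of this path inside $V^2\setminus X$ is entered and left through $X$, so, using $G_X[X]=K_X$, each such detour can be short-circuited by a single clique edge of $G^1_X[B^1]$, leaving a path inside $G^1_X[B^1]$. Thus $B^1$ is connected in $G^1_X$, and symmetrically $\sigma^2\in\Pi_c(G^2_X)$. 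Now $\sigma=\sigma^1\sqcup\sigma^2$ follows by matching block structures: a $G_X$-connected block of $\sigma$ that meets $X$ in $Y_j$ equals its $V^1$-part glued to its $V^2$-part, and a $G_X$-connected block disjoint from $X$ lies entirely on one side, so the blocks of $\sigma$ are exactly those listed in the previous paragraph. For the converse, put $\sigma=\sigma^1\sqcup\sigma^2$; the second paragraph already yields $\sigma\sqcap X=\beta$ and $\sigma\sqcap V^i=\sigma^i$, and each block $A_j\cup D_j$ is the union of a $G^1_X$-connected set and a $G^2_X$-connected set sharing the nonempty $Y_j$, while the remaining blocks are connected inside some $G^i_X\subseteq G_X$; hence every block of $\sigma$ is connected in $G_X$, i.e. $\sigma\in\Pi_c(G_X,\beta)$.

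It remains to prove $\{G[\sigma]\}=\{G^1[\sigma^1]\}\sqcup\{G^2[\sigma^2]\}$, which uses only $\sigma^i=\sigma\sqcap V^i$ and therefore handles both directions at once. Write $\tau=\{G[\sigma]\}$ and $\alpha^i=\{G^i[\sigma^i]\}$. For $\alpha^1\sqcup\alpha^2\le\tau$: every block of $\alpha^i$ induces a connected subgraph of $G^i$, hence of $G$ since $G^i$ is a subgraph of $G$, and it is contained in some block of $\sigma$, hence inside a single component of that block in $G$, i.e. inside one block of $\tau$ (the singletons used to pad $\alpha^1,\alpha^2$ in forming $\alpha^1\sqcup\alpha^2$ are trivially contained in blocks of $\tau$). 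For $\tau\le\alpha^1\sqcup\alpha^2$: given a component $C$ of $G[B]$ for a block $B$ of $\sigma$ and a path in $G[C]$, every edge of the path lies in exactly one of $E^1,E^2$ by disjointness, hence joins two vertices of $B\cap V^i$ that are $G^i$-adjacent, which places them in a common block of $\alpha^i$, hence of $\alpha^1\sqcup\alpha^2$; thus $C$ sits inside one block of $\alpha^1\sqcup\alpha^2$. The two inequalities give equality. I expect the main obstacle to be the short-circuiting argument -- making rigorous that a detour of a $G_X[B]$-path into $V^2\setminus X$ necessarily re-enters through $X$ and that the replacement edge genuinely lies in $G^1_X[B^1]$ -- and a secondary nuisance is keeping track of which ambient graph is in force, in particular that $\{G[\cdot]\}$ is taken in $G$ and not in $G_X$, so the clique edges on $X$ must play no role in the final step.
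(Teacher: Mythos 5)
Your proposal is correct and follows essentially the same route as the paper: restrict $\sigma$ to $V^1,V^2$ (resp.\ glue $\sigma^1,\sigma^2$ via $\sqcup$) and analyse blocks according to whether they meet the separator $X$ or not. In fact you supply rigor where the paper argues by example -- notably the short-circuiting of detours through $V^2\setminus X$ using the clique edges of $G^1_X$ (which is exactly where $G_X[X]\simeq K_X$ is needed), the explicit blockwise description of $\sigma^1\sqcup\sigma^2$, and the two-inequality proof of $\{G[\sigma]\}=\{G^1[\sigma^1]\}\sqcup\{G^2[\sigma^2]\}$ via the $E^1\cup E^2$ edge classification -- so no gaps.
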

\begin{proof}
$\Rightarrow:$
Let $\sigma\in\Pi_c(G_X,\beta)$ with $\sigma^1=\sigma\sqcap V^1$ and $\sigma^2=\sigma\sqcap V^2$. Consider a block $S_j\in\sigma$, then the construction of $S_j$ can be distinguished in the case (a) $X\cap S_j=\emptyset$  and the case (b) $X\cap S_j\ne\emptyset$, as been exemplified in Figure~\ref{fig:faelle}.

Note that $G^1_X[S_j\cap V^1]$, $G^2_X[S_j\cap V^2]$ are connected, possibly empty, subgraphs of $G^1_X$, $G^2_X$, for the given block $S_j$, by considering the distinct cases $(a)$ and $(b)$. Even more observe that $\beta=\sigma^1\sqcap X=\sigma^2\sqcap X$ holds, so that $\sigma^1\in\Pi_c(G_X^1,\beta)$ and $\sigma^2\in\Pi_c(G_X^2,\beta)$ is satisfied.

In the case (a) it is $S_j\cap X=\emptyset$. Set $S_j^1=S_j\cap V^1\in\sigma^1$ and $S_j^2=S_j\cap V^2\in\sigma^2$. As $X$ is a separating vertex set of $G$ either $S_j^1=S_j$ and $S_j^2=\emptyset$ or $S_j^2=S_j$ and $S_j^1=\emptyset$ can occur, as being exemplified in Figure~\ref{fig:faelle} by the blocks $\{s,t,u,v\}\in\sigma^1$ and $\{r,o,p,q\}\in\sigma^2$. Hence the conditions $\{S_j\}=\{S_j^1\}\sqcup \{S_j^2\}$ and 
$\{G[S_j]\}=\{G^1[S_j^1]\}\sqcup \{G^2[S_j^2]\}$ are satisfied.

In the case (b) it is $S_j\cap X\ne\emptyset$, so that $S_j^1=S_j\cap V^1\in\sigma^1$, $S_j^2=S_j\cap V^2\in\sigma^2$ and $S_j=S_j^1\cup S_j^2$ holds with $S_j^1\cap S_j^2=B_j\in\beta$, as $X$ is a separating vertex set of $G$ and as $\sigma$ induces $\beta$ in $X$. Note that $\{S_j\}=\{S_j^1\}\sqcup \{S_j^2\}$ is satisfied.
An example for this case is found in Figure~\ref{fig:faelle} through the block $S_j=\{a,b,i,j,k,h\}$, which is the union of the blocks $S_j^1=\{a,b,i,h\}$ and $S_j^2=\{a,b,j,k\}$. Even more the relation $\{G[S_j]\}=\{G^1[S_j^1]\}\sqcup\{G^2[S_j^2]\}$ holds. This case is exemplified in Figure~\ref{fig:fallb} through $\{G[S_j]\}=\{\{a,e,g,h,i\},\{b,c,d,f,j\}\}$, $\{G^1[S_j^1]\}=\{\{a,e\},\{b,c,f\},\{d\}\}$ and, $\{G^2[S_j^2]\}=\{\{a,g,h,i\},\{b\},\{c,d,j\}\}$.

For all $S_j\in\sigma$ the conditions $\{S_j\}=\{S_j^1\}\sqcup \{S_j^2\}$ and
$\{G[S_j]\}=\{G^1[S_j^1]\}\sqcup \{G^2[S_j^2]\}$ hold with $S_j^1=S_j\cap V^1\in\sigma^1$ and $S_j^2=S_j\cap V^2\in\sigma^2$. Recalling that a block $S_j\in\sigma$ can either decompose into one block (a) in $\sigma^1$ or $\sigma^2$ or two blocks (b) in $\sigma^1$ and $\sigma^2$, as $X$ is a separating vertex set, the desired claim with $\sigma=\sigma^1\sqcup\sigma^2$ and $\{G[\sigma]\}=\{G^1[\sigma^1]\}\sqcup\{G^2[\sigma^2]\}$ follows.

$\Leftarrow:$ Note that the set partition $\sigma=\sigma^1\sqcup \sigma^2$ is a connected set partition in $G_X$ with $\sigma\sqcap X=\beta$, by considering the cases (a) and (b), depicted in Figure~\ref{fig:faelle}.

In the case (a) it is assumed without loss of generality that $S_j^1\in\sigma^1$ holds with $S_j^1\cap X=\emptyset$ and $S_j^2=\emptyset$. It is then $\{S_j\}=\{S_j^1\}=\{S_j^1\}\sqcup\{S_j^2\}$ and even more $\{G[S_j]\}=\{G^1[S_j^1]\}\sqcup\{G^2[S_j^2]\}$, where $S_j=S_j^1\cup S_j^2\in\sigma$ is set.

In the case (b) let $S_j^1\in\sigma^1$ and $S_j^2\in\sigma^2$ be the blocks, so that $S_j^1\cap S_j^2=B_j\in\beta$ holds and set $S_j=S_j^1\cup S_j^2\in\sigma$, then $\{S_j\}=\{S_j^1\}\sqcup \{S_j^2\}$ is satisfied and even more $\{G^1[S_j^1]\}\sqcup \{G^2[S_j^2]\}=\{G[S_j]\}$ is true as being exemplified in Figure~\ref{fig:fallb}.

The application of the above argumentation to all blocks $S_j$ of $\sigma$ yields the proof of the converse claim.
\end{proof}
\begin{figure}[h]
\begin{center}
\includegraphics[scale=0.40]{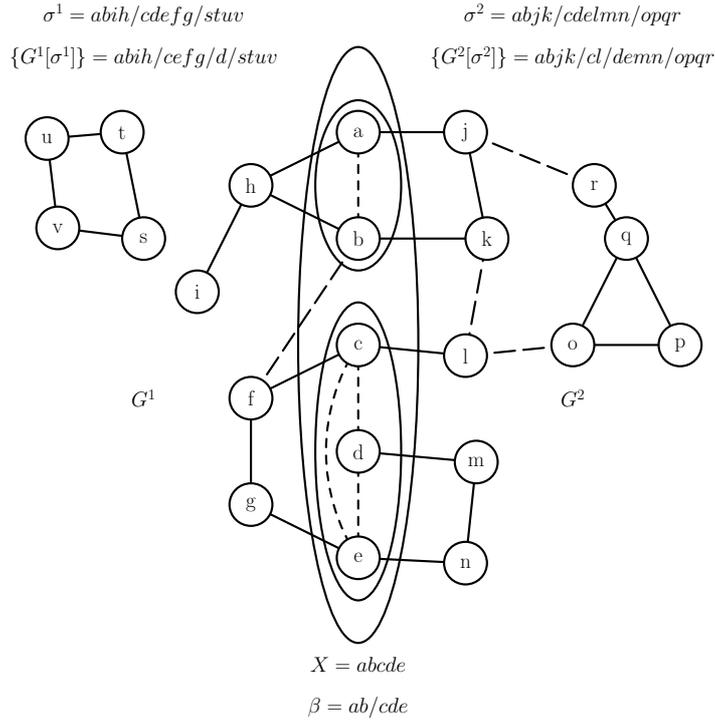}
\caption{Exemplification of the two different cases for the blocks\label{fig:faelle}}
\end{center}
\end{figure}

\begin{figure}[h]
\begin{center}
\includegraphics[scale=0.40]{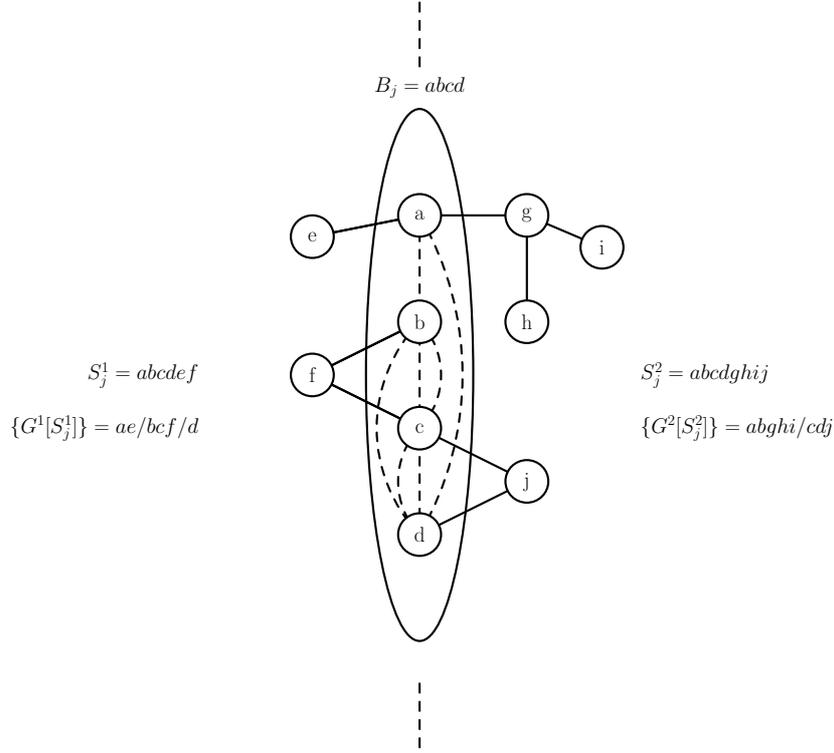}
\caption{Case (b) of the proof\label{fig:fallb}}
\end{center}
\end{figure}

Lemma~\ref{lemma:GlueingLemma} is then utilized to deduce  Theorem~\ref{thm:splittingFormel}, that allows the computation of $Q(G,x)$ by considering the subgraphs $G^1$ and $G^2$ of a splitting $s(G)=(G^1,G^2,X)$.
\begin{thmenv}\label{thm:splittingFormel}
Let $G=(V,E)$ be a graph, $s(G)=(G^1, G^2,X)$ a splitting of $G$ and $(\gamma,\beta)\in\Pi^2(X)$. Then
\begin{align}\label{eqn:splittingFormel}
T(G,\beta,\gamma;x)&=\frac{1}{x^{|\beta|}}\sum_{\gamma^1\vee\gamma^2=\gamma} 
T(G^1,\beta,\gamma^1;x)T(G^2,\beta,\gamma^2;x).
\end{align}
\end{thmenv}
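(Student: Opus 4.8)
The plan is to establish a bijection between the set $T(G,\beta,\gamma)$ and a suitable subset of the product $\bigcup_{\gamma^1\vee\gamma^2=\gamma} T(G^1,\beta,\gamma^1)\times T(G^2,\beta,\gamma^2)$, and then to translate this bijection into the stated identity of generating functions by keeping careful track of block counts. The bijection itself is essentially supplied by Lemma~\ref{lemma:GlueingLemma}: given $\sigma\in\Pi_c(G_X,\beta)$, set $\sigma^1=\sigma\sqcap V^1$ and $\sigma^2=\sigma\sqcap V^2$; conversely, given a compatible pair $(\sigma^1,\sigma^2)$ with $\sigma^1\sqcap X=\sigma^2\sqcap X=\beta$, reconstruct $\sigma=\sigma^1\sqcup\sigma^2$. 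Lemma~\ref{lemma:GlueingLemma} already guarantees that these two operations are mutually inverse and that $\{G[\sigma]\}=\{G^1[\sigma^1]\}\sqcup\{G^2[\sigma^2]\}$. So the first step is to record that $\sigma\in T(G,\beta,\gamma)$ — i.e.\ $\{G[\sigma]\}\sqcap X=\gamma$ — precisely when $\{G^1[\sigma^1]\}\sqcap X=\gamma^1$ and $\{G^2[\sigma^2]\}\sqcap X=\gamma^2$ for some $\gamma^1,\gamma^2\in\Pi(X)$ with $\gamma^1\vee\gamma^2=\gamma$; this is immediate from $\{G[\sigma]\}=\{G^1[\sigma^1]\}\sqcup\{G^2[\sigma^2]\}$ together with the fact that $\sqcup$ on $X$-partitions realized inside a common ground set computes the join, so $(\,\{G^1[\sigma^1]\}\sqcup\{G^2[\sigma^2]\}\,)\sqcap X = (\{G^1[\sigma^1]\}\sqcap X)\vee(\{G^2[\sigma^2]\}\sqcap X)$.

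The second step is the bookkeeping of block numbers, which is where the factor $x^{-|\beta|}$ comes from. Write $\sigma=\sigma^1\sqcup\sigma^2$. A block of $\sigma$ is either a block wholly inside $V^1\setminus X$, or wholly inside $V^2\setminus X$, or meets $X$. Blocks of the first two types are in bijection with the $X$-free blocks of $\sigma^1$, resp.\ $\sigma^2$. Blocks of the third type are exactly those obtained by gluing a block of $\sigma^1$ meeting $X$ to the block of $\sigma^2$ meeting $X$ in the same block of $\beta$; since $\sigma^1\sqcap X=\sigma^2\sqcap X=\beta$, the blocks of $\sigma^i$ meeting $X$ are in bijection with the blocks of $\beta$, of which there are $|\beta|$. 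Hence $|\sigma^1|+|\sigma^2|$ counts every glued block of $\sigma$ twice, once from each side, so $|\sigma|=|\sigma^1|+|\sigma^2|-|\beta|$, and therefore $x^{|\sigma|}=x^{-|\beta|}\,x^{|\sigma^1|}x^{|\sigma^2|}$. Summing this identity over all $\sigma\in T(G,\beta,\gamma)$, and using the bijection with pairs $(\sigma^1,\sigma^2)\in T(G^1,\beta,\gamma^1)\times T(G^2,\beta,\gamma^2)$ ranging over all decompositions $\gamma^1\vee\gamma^2=\gamma$, gives
\begin{align*}
T(G,\beta,\gamma;x)=\sum_{\gamma^1\vee\gamma^2=\gamma}\;\sum_{\substack{\sigma^1\in T(G^1,\beta,\gamma^1)\\\sigma^2\in T(G^2,\beta,\gamma^2)}} x^{-|\beta|}x^{|\sigma^1|}x^{|\sigma^2|}=\frac{1}{x^{|\beta|}}\sum_{\gamma^1\vee\gamma^2=\gamma}T(G^1,\beta,\gamma^1;x)T(G^2,\beta,\gamma^2;x),
\end{align*}
which is the claim.

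The main obstacle I anticipate is not any single deep step but the need to be scrupulous about two easily-overlooked points. First, one must verify that the correspondence $\sigma\mapsto(\sigma^1,\sigma^2)$ really does land in $T(G^i,\beta,\gamma^i)$ rather than merely in $\Pi_c(G^i_X,\beta)$ — that is, that every $\gamma^i$ arising as $\{G^i[\sigma^i]\}\sqcap X$ is $\le\beta$ and that the pair $(\gamma^1,\gamma^2)$ genuinely ranges over \emph{all} pairs with $\gamma^1\vee\gamma^2=\gamma$ when $\sigma$ ranges over $T(G,\beta,\gamma)$ (surjectivity of the decomposition), which again follows from the converse direction of Lemma~\ref{lemma:GlueingLemma}. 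Second, the block-count identity $|\sigma|=|\sigma^1|+|\sigma^2|-|\beta|$ must be argued at the level of $\sigma$ itself and not of $\{G[\sigma]\}$, since $T(\cdot;x)$ weights by $|\pi|$ for $\pi\in\Pi_c(G_X,\beta)$, and one should note in passing that $x^{|\beta|}$ indeed divides the right-hand side exactly (every term is a genuine polynomial because each glued block of $\sigma$ contributes a factor absorbing the division), so the $\frac{1}{x^{|\beta|}}$ is a formal convenience rather than introducing denominators.
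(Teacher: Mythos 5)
Your proposal is correct and follows essentially the same route as the paper's own proof: invoke Lemma~\ref{lemma:GlueingLemma} for the correspondence $\sigma\leftrightarrow(\sigma^1,\sigma^2)$, observe $\gamma=\gamma^1\vee\gamma^2$, and compensate the double counting of the $|\beta|$ blocks meeting $X$ by the factor $x^{-|\beta|}$. Your explicit block-count identity $|\sigma|=|\sigma^1|+|\sigma^2|-|\beta|$ and the remark that $\sqcap X$ is compatible with $\sqcup$ merely spell out what the paper states informally.
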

\begin{proof}
Due to Lemma~\ref{lemma:GlueingLemma} every two connected set partitions $\sigma^1$, $\sigma^2$ in the subgraphs $G_X^1$, $G_X^2$ inducing $\beta$ in $X$,  constitute a connected set partition $\sigma=\sigma^1\sqcup \sigma^2$ in $G_X$ that induces $\beta$ in $X$ satisfying $\{G[\sigma]\}=\{G^1[\sigma_1]\}\sqcup\{G^2[\sigma_2]\}$, and even more every connected set partition of $G_X$ inducing $\beta$ in $X$ emerges in this way. 

Setting $\gamma^1=\{G^1[\sigma^1]\}\sqcap X$, $\gamma^2=\{G^2[\sigma^2]\}\sqcap X$ and $\gamma=\{G[\sigma]\}\sqcap X$ yields under the view of $\{G[\sigma]\}=\{G^1[\sigma_1]\}\sqcup\{G^2[\sigma_2]\}$ the condition $\gamma=\gamma^1\vee\gamma^2$.

Therefore the product $T(G^1, \beta,\gamma^1,x)T(G^2, \beta,\gamma^2,x)$ counts the number of connected set partitions $\sigma$ in $G_X$ that are inducing $\beta$ in $X$ satisfying $\gamma^1=\{G^1[\sigma^1]\}\sqcap X$, $\gamma^2=\{G^2[\sigma^2]\}\sqcap X$ and, $\gamma=\gamma^1\vee\gamma^2=\{G[\sigma]\}\sqcap X$. However there is an overcounting done by simply taking the product, as every block of the connected set partition of $G_X$ that has a nonempty intersection with $X$ is counted twice by both factors of the product, which complies with the case (b) in the proof of Lemma~\ref{lemma:GlueingLemma}. Therefore it is necessary to divide the product by the factor $x^{|\beta|}$, as there are exactly $|\beta|$ blocks with this property. The summation ranges over all $\gamma^1$ and $\gamma^2$ with $\gamma^1\vee\gamma^2=\gamma$, as only the connected set partitions $\sigma$ with $\{G[\sigma]\}\sqcap X=\gamma$ are counted by $T(G,\beta,\gamma,x)$.
\end{proof}

Finally the announced splitting formula is deduced by application of Theorem~\ref{thm:splittingFormel} and the Equation~\eqref{eqn:calculatingTheQPolynomialsByAddingRestrictedTPolynomials}.
\begin{thmenv}\label{thm:splittingFormula}
Let $G=(V,E)$ be a graph and $s(G)=(G^1,G^2,X)$ a splitting of $G$. Then
\begin{align}
Q(G,x)&=\sum_{\beta\in\Pi(X)}
\frac{1}{x^{|\beta|}}
\sum_{\gamma^1\vee\gamma^2=\beta} 
T(G^1,\beta,\gamma^1;x)T(G^2,\beta,\gamma^2;x).
\end{align}
\end{thmenv}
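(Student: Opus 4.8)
The plan is to obtain Theorem~\ref{thm:splittingFormula} as an immediate consequence of two facts already established in the excerpt: the decomposition of $Q(G,x)$ into restricted $T$-polynomials given by Equation~\eqref{eqn:calculatingTheQPolynomialsByAddingRestrictedTPolynomials}, and the product formula for those restricted polynomials provided by Theorem~\ref{thm:splittingFormel}. First I would recall that by Equation~\eqref{eqn:calculatingTheQPolynomialsByAddingRestrictedTPolynomials}, applied to the separating vertex set $X$ of the splitting $s(G)=(G^1,G^2,X)$, one has
\begin{align*}
Q(G,x)&=\sum_{\beta\in\Pi(X)} T(G,\beta,\beta;x),
\end{align*}
since the connected set partitions of $G$ are partitioned according to the set partition $\beta$ they induce on $X$, and $T(G,\beta,\beta;x)$ is precisely the generating function of those inducing $\beta$ whose induced connected refinement still restricts to $\beta$ on $X$ — which, because $\beta\in\Pi(X)$ is already fixed, is the full count of connected set partitions of $G$ inducing $\beta$ on $X$.

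Next I would substitute, for each $\beta$, the value of $T(G,\beta,\gamma;x)$ with $\gamma=\beta$ given by Theorem~\ref{thm:splittingFormel}:
\begin{align*}
T(G,\beta,\beta;x)&=\frac{1}{x^{|\beta|}}\sum_{\gamma^1\vee\gamma^2=\beta}
T(G^1,\beta,\gamma^1;x)\,T(G^2,\beta,\gamma^2;x).
\end{align*}
Inserting this into the sum over $\beta$ yields exactly the claimed identity. The only point requiring a word of care is the legitimacy of setting $\gamma=\beta$ in Theorem~\ref{thm:splittingFormel}: one must note that $\gamma\le\beta$ always holds for a nonzero term (stated in the excerpt right after the definition of the $T$-sets, since $\{G[\pi]\}\le\pi$ forces $\{G[\pi]\}\sqcap X\le\pi\sqcap X=\beta$), so the case $\gamma=\beta$ is permissible and collects the connected set partitions $\sigma\in\Pi_c(G_X,\beta)$ whose own induced connected refinement does not split any block across $X$; summing $T(G,\beta,\gamma;x)$ over all admissible $\gamma\le\beta$ would instead recount the full set $\Pi_c(G_X,\beta)$, which is \emph{not} what we want — we want $\Pi_c(G)$, i.e.\ genuinely connected set partitions of $G$ itself, and these correspond bijectively to the $\gamma=\beta$ case.

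There is essentially no obstacle here beyond bookkeeping: the substantive work has already been carried out in Lemma~\ref{lemma:GlueingLemma} (the gluing correspondence $\sigma\mapsto(\sigma^1,\sigma^2)$ and its inverse) and in Theorem~\ref{thm:splittingFormel} (the product-with-correction-factor $1/x^{|\beta|}$ accounting for the double-counting of the $|\beta|$ blocks meeting $X$). The mild subtlety, and the thing I would state explicitly to avoid confusion, is why exactly the diagonal term $\gamma=\beta$ is the right one to feed into Equation~\eqref{eqn:calculatingTheQPolynomialsByAddingRestrictedTPolynomials}: it is because a partition $\sigma\in\Pi_c(G_X,\beta)$ restricts on $X$ to $\beta$, but the true graph $G$ lacks the artificial clique edges on $X$, so $\sigma$ is a connected set partition of $G$ precisely when passing to $G$ does not break any of its blocks, equivalently when $\{G[\sigma]\}\sqcap X=\beta$, equivalently $\sigma\in T(G,\beta,\beta)$. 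Once that is observed, the proof is a two-line substitution, and I would present it as such.
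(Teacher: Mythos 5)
Your proposal is correct and follows exactly the paper's own route: substitute the identity of Theorem~\ref{thm:splittingFormel} with $\gamma=\beta$ into the decomposition $Q(G,x)=\sum_{\beta\in\Pi(X)}T(G,\beta,\beta;x)$ of Equation~\eqref{eqn:calculatingTheQPolynomialsByAddingRestrictedTPolynomials}; the paper's proof is precisely this substitution, stated in one line. Your extra remark explaining why the diagonal term $\gamma=\beta$ captures exactly the connected set partitions of $G$ itself is a sound elaboration of what Equation~\eqref{eqn:calculatingTheQPolynomialsByAddingRestrictedTPolynomials} already encodes, not a deviation from the paper's argument.
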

\begin{proof}
Just substitute Equation~\eqref{eqn:splittingFormel} in Equation~\eqref{eqn:calculatingTheQPolynomialsByAddingRestrictedTPolynomials} to get the desired result.
\end{proof}
\noindent The computational requirements of Theorem~\ref{thm:splittingFormula} can be reduced, if the intersection of the subgraphs $G^1$ and $G^2$ is the complete graph $K_X$ of the separating vertex set $X$. In this case the polynomials $T(G^1,\beta,\gamma;x)$ and $T(G^2,\beta, \gamma;x)$ will vanish for all $\gamma$ with $\gamma\ne\beta$.
\begin{corenv}\label{Corollary:SpecialCaseCompleteGraph}
Let $G=(V,E)$ be a graph and $s(G)=(G^1,G^2,X)$ a splitting, so that $G[X]\simeq K_{|X|}$ holds. Then
\begin{align}\label{eqn:splittingFormulaCompleteGraph}
Q(G,x)&=\sum_{\beta\in\Pi(X)}\frac{1}{x^{|\beta|}}
T(G^1,\beta,\beta;x)T(G^2,\beta,\beta;x).
\end{align}
\end{corenv}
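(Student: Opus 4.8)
The plan is to deduce \eqref{eqn:splittingFormulaCompleteGraph} from Theorem~\ref{thm:splittingFormula} by showing that the hypothesis forces the restricted polynomials on each side of the splitting to be concentrated on a single partition of $X$. Concretely, I would prove the assertion already announced in the paragraph preceding the corollary: under the stated assumption $T(G^i,\beta,\gamma;x)=0$ for every $\beta\in\Pi(X)$, every $\gamma\neq\beta$, and both $i\in\{1,2\}$; then substitute this into the general splitting formula and collapse the inner sum.

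The first step is to make precise which consequence of the hypothesis I use. In the sense of the surrounding text, ``$G[X]\simeq K_{|X|}$'' means that the part shared by $G^1$ and $G^2$ is the complete graph on $X$, that is, all $\binom{|X|}{2}$ edges on $X$ are already present in each of $G^1$ and $G^2$. Equivalently, $G^i=G^i_X$ for $i\in\{1,2\}$, since $G^i_X$ arises from $G^i$ precisely by completing $X$ to a clique. (Put differently, the point that must be checked is that removing the artificially added clique edges from $G^i_X$ cannot disconnect any block of a connected set partition of $G^i_X$.)

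The key step is then short. Fix $\beta\in\Pi(X)$ and take any $\sigma\in\Pi_c(G^i_X,\beta)$. Because $G^i_X=G^i$, each block $S$ of $\sigma$ already induces a connected subgraph $G^i[S]$, so the component partition $\{G^i[S]\}$ consists of the single block $S$; taking the union over the blocks of $\sigma$ yields $\{G^i[\sigma]\}=\sigma$, and hence $\{G^i[\sigma]\}\sqcap X=\sigma\sqcap X=\beta$. Consequently the condition $\{G^i[\pi]\}\sqcap X=\gamma$ defining membership in $T(G^i,\beta,\gamma)$ can hold only when $\gamma=\beta$, so $T(G^i,\beta,\gamma)=\emptyset$, and therefore $T(G^i,\beta,\gamma;x)=0$, whenever $\gamma\neq\beta$.

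Finally I would substitute into Theorem~\ref{thm:splittingFormula}. In the inner sum over all $\gamma^1\vee\gamma^2=\beta$, each product $T(G^1,\beta,\gamma^1;x)\,T(G^2,\beta,\gamma^2;x)$ vanishes unless $\gamma^1=\beta$ and $\gamma^2=\beta$; since $\beta\vee\beta=\beta$, the pair $(\beta,\beta)$ does lie in the range of summation, so only the term $T(G^1,\beta,\beta;x)\,T(G^2,\beta,\beta;x)$ survives, which is exactly \eqref{eqn:splittingFormulaCompleteGraph}. The one place that requires genuine care is the first step: one must be clear that ``$G[X]\simeq K_{|X|}$'' is being used to guarantee that the $X$-clique sits inside each side of the splitting, so that the induced component partition $\{G^i[\sigma]\}$ of a connected set partition $\sigma$ of $G^i_X$ does not properly refine $\sigma$; granting that, the remainder is routine manipulation of $\sqcap$, $\vee$, and the summation ranges.
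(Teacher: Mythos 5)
Your proposal is correct and takes essentially the same route as the paper, whose only justification for the corollary is the remark preceding it: reading the hypothesis as saying that the clique $K_X$ sits inside both $G^1$ and $G^2$ (so $G^i=G^i_X$), you show $T(G^i,\beta,\gamma;x)=0$ for $\gamma\neq\beta$ and collapse the inner sum of Theorem~\ref{thm:splittingFormula} to its diagonal term, which is exactly the paper's (unwritten) argument, with the vanishing claim actually proved rather than asserted. Your caution about the hypothesis is indeed the crux: under the literal edge-disjoint splitting definition, with the clique edges assigned to only one side, the vanishing fails and so does the formula (take $G=G^1=K_2$, $G^2$ edgeless on $X=V$: then $T(G^2,\hat1,\hat1;x)=0$ and the right-hand side is $x^2$ while $Q(K_2,x)=x^2+x$), so the reading you adopt, matching the paper's phrase that the intersection of $G^1$ and $G^2$ is $K_X$, is the one under which both the corollary and your argument are valid.
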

\noindent Corollary~\ref{Corollary:SpecialCaseCompleteGraph} covers two special cases, that exhibit a special simple form, namely $X=\emptyset$ and $X=\{v\}$. As in the case of two disjoint graphs $G^1$ and $G^2$ it is  
\begin{align*}
Q(G,x)&=Q(G^1,x)Q(G^2,x)
\end{align*}
and in the case of an articulation $v$
\begin{align*}
Q(G,x)&=\frac{1}{x}Q(G^1,x)Q(G^2,x).
\end{align*}
Observe that the above two formulas also occur in the computation of the chromatic polynomial $P(G,x)$ of the graph $G$, e.g.
\begin{align}
P(G,x)&=P(G^1,x)P(G^2,x)
\end{align}
and
\begin{align}
P(G,x)&=\frac{1}{x}P(G^1,x)P(G^2,x).
\end{align}
are valid as well.

\section{Clusterings in graphs}
This brief section presents a possible generalization of the partition polynomial, the minimal cut polynomial. Most formulas derived in the previous sections are readily transfered to this more general polynomial.

Let $G=(V,E)$ be a graph and $\pi=\{V_1, \ldots, V_k\}\in\Pi_c(G)$ a connected set partition. Denote by $E(G[V_i])$ the edge set of the subgraph $G[V_i]$ induced by $V_i$ for all $1\le i\le k$. Then 
\begin{align*}
E(G[\pi])&=\bigcup_{i=1}^k E(G[V_i])
\end{align*}
denotes the set of intra block edges, where $\pi$ is a connected set partition in $G$.

Let $G=(V,E)$ be a graph with $n$ vertices and $m$ edges. The bivariate minimal cut polynomial $Q(G,x,y)$ is an extension of the partition polynomial that accounts for the edges by introducing the counting variable $y$:
\begin{align}
Q(G,x,y)&=\sum_{\pi\in\Pi_c(G)}x^{|\pi|}y^{|E(G[\pi])|}\\
&=\sum_{i=0}^n\sum_{j=0}^m q_{ij}(G)x^iy^j
\end{align}
The coefficients $q_{ij}(G)$ can be interpreted as the number of connected set partitions of $V$ with $i$ blocks and $j$ edges, whose endvertices belong to the same block or as the number of clusterings in $G$ with $i$ blocks and $j$ intra-cluster edges. Note that $Q(G,x)=Q(G,x,1)$, so that the minimal cut polynomial is an extension of the partition polynomial. 

The minimal cut polynomial of the graph $G=(V,E)$ depicted in Figure~\ref{fig:exampleGraph} is 
\begin{align*}
Q(G,x,y)&=x^6+9yx^5+(4y^3+24y^2)x^4+(3y^5+12y^4+20y^3)x^3+\\
&\,(2y^7+5y^6+4y^5+4y^4)x^2+y^9x.
\end{align*}

In analogy to Corollary~\ref{cor_arti}, vertices with degree one can be reduced.
\begin{corenv}\label{cor:vertexOfDegreeOneMinimalCutPolynomial}
Let $G=(V,E)$ be a graph and $Q(G,x,y)$ the minimal cut polynomial of $G$. If $v\in V$ is a vertex of degree 1, then
\begin{align*}
Q(G,x,y)&=(y+x)Q(G-v,x,y)
\end{align*}
\end{corenv}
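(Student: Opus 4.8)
The plan is to mirror the proof of Corollary~\ref{cor_arti}, tracking the new edge-counting variable $y$ along the way. Write $e=\{u,v\}$ for the unique edge incident to $v$; since $v$ has degree one, $e$ is a bridge of $G$. As in the proof of Lemma~\ref{lemma_bridge}, I would split $\Pi_c(G)$ into two classes: (1) connected set partitions in which $u$ and $v$ lie in a common block, and (2) those in which $u$ and $v$ lie in different blocks.

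For class (2): because $N(v)=\{u\}$, if $v$ is not in the block of $u$ then the block containing $v$ must be the singleton $\{v\}$. Deleting this singleton sets up a bijection between class (2) and $\Pi_c(G-v)$. The singleton $\{v\}$ contributes one additional block and no intra-block edge, so the total contribution of class (2) to $Q(G,x,y)$ is $x\,Q(G-v,x,y)$.

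For class (1): if $v$ and $u$ lie in a common block $X$, then $G[X]$ is connected if and only if $G[X\setminus\{v\}]$ is connected, since $v$ is a pendant vertex attached to $u\in X\setminus\{v\}$. Deleting $v$ from its block therefore gives a bijection between class (1) and $\Pi_c(G-v)$; conversely, given $\pi\in\Pi_c(G-v)$ one re-inserts $v$ into the unique block containing $u$. This operation preserves the number of blocks and adds exactly one intra-block edge, namely $e$ itself. Hence class (1) contributes $y\,Q(G-v,x,y)$.

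Adding the two contributions yields $Q(G,x,y)=(x+y)\,Q(G-v,x,y)$, which is the claim. I do not expect a genuine obstacle here; the only points requiring a little care are verifying, in class (1), that exactly one new intra-block edge is created and that connectivity of the enlarged block is equivalent to connectivity of the original one — both are immediate consequences of $v$ being pendant with sole neighbor $u$. (Note also that $G-v$ is never the null graph, since $u\in V(G-v)$, so $Q(G-v,x,y)$ behaves as expected and no degenerate case arises.)
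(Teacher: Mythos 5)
Your proof is correct and follows exactly the route the paper intends: the paper gives no separate argument for this corollary, merely invoking the analogy with Corollary~\ref{cor_arti}, i.e.\ the two-class (pendant vertex as singleton versus pendant vertex absorbed into the block of its neighbor) decomposition from Lemma~\ref{lemma_bridge}, with the single intra-block edge $e=\{u,v\}$ accounting for the factor $y$. Your bookkeeping of blocks and intra-block edges in both classes is accurate, so nothing is missing.
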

\noindent Let $T_n$ be an arbitrary tree with $n\ge 1$ vertices, then Corollary~\ref{cor:vertexOfDegreeOneMinimalCutPolynomial} can be applied $n-1$ times, hence
\begin{align*}
Q(T_n,x,y)&=(x+y)^{n-1}x.
\end{align*}
In the same vein Theorem~\ref{theo_decom} generalizes to the minimal cut polynomial.
\begin{thmenv}\label{thm:removalOfOneVertex}
Let $G=(V,E)$ be a graph and $v\in V$, then
\begin{align*}
Q(G,x,y)&=x\sum_{\substack{\{v\}\subseteq W\subseteq V \\ G[W] \text{ is connected} }}
y^{|E(G[W])|}Q(G-W,x,y).
\end{align*}
\end{thmenv}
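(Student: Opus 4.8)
The plan is to mimic the proof of Theorem~\ref{theo_decom} (the univariate decomposition), keeping careful track of the exponent of $y$. Fix the vertex $v\in V$. Every connected set partition $\pi\in\Pi_c(G)$ has a unique block containing $v$; call it $W$. Since $\pi$ is connected, $G[W]$ is connected, so $W$ ranges precisely over the vertex-induced connected subgraphs containing $v$, which is exactly the index set of the sum. Conversely, given such a $W$, the remaining blocks $\pi\setminus\{W\}$ form a connected set partition of $G-W$, and this correspondence is a bijection between $\{\pi\in\Pi_c(G): v\in W\text{ is its block}\}$ and $\Pi_c(G-W)$.

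The new point relative to Theorem~\ref{theo_decom} is the bookkeeping of intra-block edges. For a fixed $W$ and a fixed $\pi'=\pi\setminus\{W\}\in\Pi_c(G-W)$, I would argue that the intra-block edges of $\pi$ split into two disjoint contributions: the edges inside the block $W$, which are exactly the edges of $E(G[W])$; and the intra-block edges of the partition $\pi'$ viewed inside $G-W$, which are exactly $E((G-W)[\pi'])$. These two edge sets are disjoint and their union is $E(G[\pi])$, because $G-W$ contains no edge incident to $W$ and every intra-block edge of $\pi$ either lies inside $W$ or inside a block disjoint from $W$. Hence $|E(G[\pi])| = |E(G[W])| + |E((G-W)[\pi'])|$, so $\pi$ contributes $x\cdot x^{|\pi'|}\, y^{|E(G[W])|}\, y^{|E((G-W)[\pi'])|}$, the leading factor $x$ accounting for the block $W$ itself. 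Summing over $\pi'\in\Pi_c(G-W)$ gives $x\, y^{|E(G[W])|} Q(G-W,x,y)$, and then summing over all admissible $W$ yields the claimed formula.

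The only step requiring genuine care — and the natural place to state it explicitly — is the edge-set decomposition $E(G[\pi]) = E(G[W]) \,\dot\cup\, E((G-W)[\pi'])$, since one must check both that no edge of $G$ joining $W$ to $V\setminus W$ can ever be an intra-block edge of $\pi$ (true because such an edge links two distinct blocks, $W$ and whichever block of $\pi'$ contains its other endpoint) and that restricting to $G-W$ does not discard any intra-block edge of $\pi'$ (true because every block of $\pi'$ is contained in $V\setminus W$, so all its induced edges survive in $G-W$). Everything else is the verbatim bijection from Theorem~\ref{theo_decom}. Setting $y=1$ recovers Theorem~\ref{theo_decom} as a sanity check.
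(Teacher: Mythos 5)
Your argument is correct and is precisely the route the paper intends: the paper states Theorem~\ref{thm:removalOfOneVertex} without a separate proof, relying on the reader to transfer the block-containing-$v$ decomposition from Theorem~\ref{theo_decom}, and your proposal carries out exactly that transfer. The one genuinely new ingredient, the disjoint edge-set decomposition $E(G[\pi]) = E(G[W]) \,\dot\cup\, E((G-W)[\pi'])$, is stated and justified correctly, so nothing is missing.
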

\noindent The application of Theorem~\ref{thm:removalOfOneVertex} to the cycle $C_n$ of length $n$ yields
\begin{align*}
Q(C_n,x,y)&=xy^{n}+x\sum_{k=1}^{n-1}ky^{k-1}Q(P_{n-k};x,y) \\
&=xy^{n}+x^2\sum_{k=1}^{n-1}ky^{k-1}(x+y)^{n-k-1}\\
&=(x+y)^n-xy^{n-1}(n-y)-y^n.
\end{align*}
Similarly Theorem~\ref{thm:removalOfOneVertex} can be applied to the complete graph $K_n$ on $n$ vertices, yielding the recurrence relation
\begin{align*}
Q(K_n,x,y)&=x\sum_{k=1}^n \binom{n-1}{k-1} y^{\binom{k}{2}} Q(K_{n-k},x,y)
\end{align*}
with the initial condition $Q(K_0;x,y)=1$. 

It is notable that the exponential generating function of the sequence $\{Q(K_n,x,y)\}$
\begin{align*}
\hat{H}(z;x,y)&=\sum_{n\ge 0} Q(K_n;x,y)\frac{z^n}{n!}
\end{align*}
is conveyed by application of the exponential formula~\cite{wilf:generatingfunctionology}
\begin{align*}
\hat{H}(z;x,y)&=\exp\left(\sum_{i\ge 1} xy^{\binom{i}{2}}\frac{z^i}{i!}
\right),
\end{align*}
so that the sum representation 
\begin{align*}
Q(K_n,x,y)&=
\sum_{(1^{k_1}\ldots n^{k_n})\vdash n}
\frac{n!}{k_1!\cdots k_n!}
\left(\frac{xy^{\binom{1}{2}}}{1!}\right)^{k_1}
\cdots
\left(\frac{xy^{\binom{n}{2}}}{n!}\right)^{k_n}
\end{align*}
is found, where the sum ranges over all partitions of the number $n$, where the number of parts of size $i$ is given by $k_i$, hence $\sum_{i=1}^n ik_i=n$ holds.

Theorem~\ref{thm:splittingFormula} is also easily transfered to the minimal cut polynomial, where the $T(G,\beta,\gamma,x,y)$ are defined in analogy to the $T(G,\beta,\gamma,x)=T(G,\beta,\gamma,x,1)$. Note that by the definition of the splitting no overcounting of edges occurs that has to be considered, as $E=E^1\cup E^2$ with $E^1\cap E^2=\emptyset$ holds, for every splitting $s(G)=((V^1,E^1),(V^2,E^2),X)$ of $G$.
\begin{thmenv}\label{thm:splittingFormulaGeneralizedPartitionPolynomial}
Let $G=(V,E)$ be a graph and $s(G)=(G^1, G^2, X)$ a splitting of $G$. Then
\begin{align*}
Q(G,x,y)&=\sum_{\beta\in\Pi(X)}\frac{1}{x^{|\beta|}} \sum_{\gamma^1\vee\gamma^2=\beta}
T(G^1,\beta,\gamma,x,y)T(G^2,\beta,\gamma,x,y).
\end{align*}
\end{thmenv}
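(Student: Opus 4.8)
The plan is to mirror the development of the univariate splitting formula, carrying the edge variable $y$ along as passive bookkeeping. First I would introduce, for $\gamma,\beta\in\Pi(X)$, the bivariate restricted polynomial
\begin{align*}
T(G,\beta,\gamma,x,y)&=\sum_{\pi\in T(G,\beta,\gamma)}x^{|\pi|}y^{|E(G[\pi])|},
\end{align*}
where $E(G[\pi])$ uses only edges of $G$ (the artificial edges that make $G_X[X]$ complete are not edges of $G$, so they never enter any edge count). Then $T(G,\beta,\gamma,x,y)|_{y=1}=T(G,\beta,\gamma;x)$, and taking $X=\emptyset$ gives $Q(G,x,y)=T(G,\emptyset,\emptyset,x,y)$; more generally, since the connected set partitions of $G$ are still partitioned according to the trace they induce on $X\subseteq V$ and the edge statistic is unaffected by this indexing, one has the bivariate analogue of Equation~\eqref{eqn:calculatingTheQPolynomialsByAddingRestrictedTPolynomials}, namely $Q(G,x,y)=\sum_{\beta\in\Pi(X)}T(G,\beta,\beta,x,y)$.

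The one genuinely new ingredient is an additivity statement for intra-block edges under a splitting. Given $s(G)=(G^1,G^2,X)$ and $\sigma\in\Pi_c(G_X,\beta)$ with $\sigma^1=\sigma\sqcap V^1$, $\sigma^2=\sigma\sqcap V^2$, I would show that $E(G[\sigma])=E(G^1[\sigma^1])\cup E(G^2[\sigma^2])$ and that this union is disjoint. Indeed, since $E=E^1\cup E^2$ with $E^1\cap E^2=\emptyset$, any $e=\{u,v\}\in E(G[\sigma])$ lies in exactly one of $E^1,E^2$; if $e\in E^1$ then both endpoints lie in $V^1$, hence in the same block $\sigma^1=\sigma\sqcap V^1$, so $e\in E(G^1[\sigma^1])$, and symmetrically if $e\in E^2$. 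The reverse inclusions and the disjointness are immediate from $E(G^i[\sigma^i])\subseteq E^i$. Consequently $|E(G[\sigma])|=|E(G^1[\sigma^1])|+|E(G^2[\sigma^2])|$, so the $y$-weights of $\sigma^1$ and $\sigma^2$ multiply to that of $\sigma$.

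With this in hand, the bivariate restricted splitting identity
\begin{align*}
T(G,\beta,\gamma,x,y)&=\frac{1}{x^{|\beta|}}\sum_{\gamma^1\vee\gamma^2=\gamma}T(G^1,\beta,\gamma^1,x,y)\,T(G^2,\beta,\gamma^2,x,y)
\end{align*}
follows exactly as Theorem~\ref{thm:splittingFormel}: Lemma~\ref{lemma:GlueingLemma} is a purely set-theoretic statement about vertex partitions and applies verbatim, providing the bijection $\sigma\leftrightarrow(\sigma^1,\sigma^2)$ between $\Pi_c(G_X,\beta)$ and $\Pi_c(G^1_X,\beta)\times\Pi_c(G^2_X,\beta)$, compatible with $\{G[\sigma]\}=\{G^1[\sigma^1]\}\sqcup\{G^2[\sigma^2]\}$ and hence with $\gamma=\gamma^1\vee\gamma^2$ for $\gamma^i=\{G^i[\sigma^i]\}\sqcap X$. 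On the $x$-side, the $|\beta|$ blocks of $\sigma$ meeting $X$ are each split into two pieces counted by both factors, which is precisely the overcounting corrected by the factor $x^{|\beta|}$, just as in the univariate case; on the $y$-side the edge-additivity just established shows there is no double counting of edges, so no correction in $y$ is required. Substituting this identity into $Q(G,x,y)=\sum_{\beta\in\Pi(X)}T(G,\beta,\beta,x,y)$ and specializing $\gamma=\beta$ yields the stated formula.

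The only step needing real care is the edge-additivity claim: one must check that a block $S_j$ of $\sigma$ straddling $X$, when cut into $S_j^1=S_j\cap V^1$ and $S_j^2=S_j\cap V^2$, does not lose any intra-block edge — which is exactly where one uses that every edge of $G$ has \emph{both} endpoints on a single side of the splitting, so every intra-block edge of $\sigma$ is retained on precisely one of the two sides. Everything else is a transcription of the argument for Theorem~\ref{thm:splittingFormula} with the extra $y$-exponents carried through.
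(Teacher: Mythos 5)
Your proposal is correct and follows essentially the same route the paper intends: it defines $T(G,\beta,\gamma,x,y)$ in analogy to the univariate case, reuses Lemma~\ref{lemma:GlueingLemma} and the argument of Theorem~\ref{thm:splittingFormel} with the block overcount corrected by $x^{|\beta|}$, and observes that no correction in $y$ is needed because $E=E^1\cup E^2$ with $E^1\cap E^2=\emptyset$ makes the intra-block edge counts additive. Your explicit verification of the edge-additivity $|E(G[\sigma])|=|E(G^1[\sigma^1])|+|E(G^2[\sigma^2])|$ is exactly the point the paper disposes of in one remark, so your write-up is a faithful (and more detailed) version of the paper's argument.
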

\noindent In the special cases $X=\emptyset$ and $X=\{v\}$ again
\begin{align*}
Q(G,x,y)&=Q(G^1,x,y)Q(G^2,x,y)
\end{align*}
and 
\begin{align*}
Q(G,x,y)&=\frac{1}{x}Q(G^2,x,y)Q(G^2,x,y)
\end{align*}
is satisfied.

\section{Distinctive power}
Two graphs $G$ and $H$ with coinciding partition polynomials are said to be partition equivalent and likewise chromatically equivalent, if their chromatic polynomials coincide. In the view of Theorem~\ref{thm:rota} it is remarkable that there are graphs that are partition equivalent, but not chromatically equivalent and conversely that there are graphs, which are chromatically equivalent, but not partition equivalent.

\begin{figure}[ht]
\begin{center}
\includegraphics[scale=0.55]{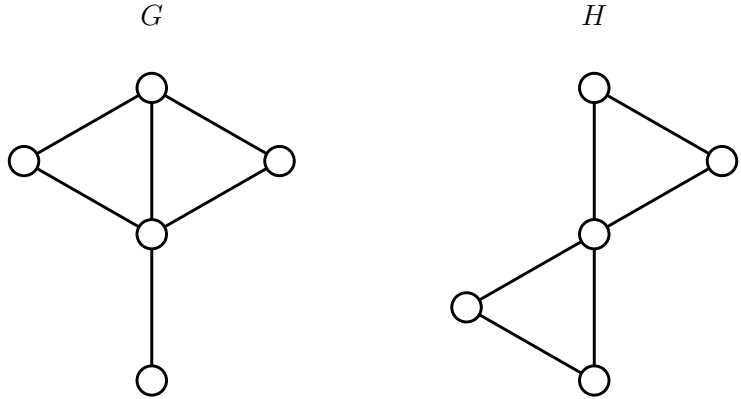}
\caption{Two chromatically equivalent graphs, that are not partition equivalent\label{fig:nonQEquivalentButChromaticallyEquivalent}.}
\end{center}
\end{figure}
\noindent Consider for example the graphs $G$ and $H$ depicted in Figure~\ref{fig:nonQEquivalentButChromaticallyEquivalent} that are chromatically equivalent 
\begin{align*}
P(G,x)=P(H,x)&=x^5-7x^4+18x^3-20x^2+8x,
\end{align*}
but with the distinct partition polynomials
\begin{align*}
Q(G,x)&=x^5+7x^4+15x^3+11x^2+x, \\
Q(H,x)&=x^5+7x^4+15x^3+10x^2+x
\end{align*}
\begin{figure}[ht]
\begin{center}
\includegraphics[scale=0.55]{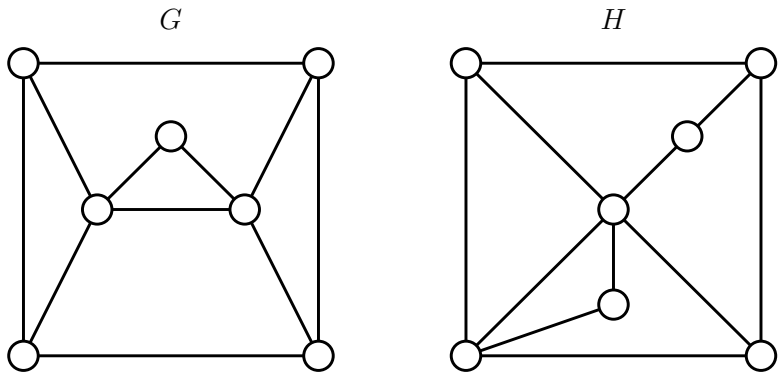}
\caption{Two partition equivalent graphs, that are not chromatically equivalent\label{fig:nonChromEquivalentButQEquivalent}.}
\end{center}
\end{figure}
\noindent and observe that the graphs $G$ and $H$ depicted in Figure~\ref{fig:nonChromEquivalentButQEquivalent} are partition equivalent 
\begin{align*}
Q(G,x)&=Q(H,x)=x^7+11x^6+49x^5+105x^4+100x^3+32x^2+x,
\end{align*}
with the distinct chromatic polynomials
\begin{align*}
P(G,x)&=x^7-11x^6+52x^5-134x^4+196x^3-152x^2+48x, \\
P(H,x)&=x^7-11x^6+52x^5-135x^4+201x^3-160x^2+52x.
\end{align*}
The efficient construction of graphs that are partition equivalent, but not chromatically equivalent remains an open problem. The smallest counterexample depicted in Figure~\ref{fig:nonChromEquivalentButQEquivalent} was found after an exhaustive computer search using a library of nonisomorphic graphs found in~\cite{brendan:listOfGraphs}. 

\section{Complexity}

\begin{thmenv}
The computation of the two-variable polynomial $Q\left( G;x,y\right) $ is
\#P-hard.
\end{thmenv}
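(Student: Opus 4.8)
Before hunting for a reduction I would rewrite the polynomial in a form that makes transparent what it actually counts. A connected set partition $\pi\in\Pi_c(G)$ is completely recovered from its set of intra-block edges $F=E(G[\pi])$: two vertices lie in a common block of $\pi$ exactly when they lie in a common component of $(V,F)$, and the edge sets arising this way are precisely the flats (closed sets) of the cycle matroid of $G$. Hence $Q(G,x,y)=\sum_{F}x^{c(V,F)}y^{|F|}$, the sum ranging over flats $F$ of the cycle matroid, with $c(V,F)$ the number of components of $(V,F)$; equivalently, knowing $Q(G,x,y)$ as a polynomial is the same as knowing, for every pair (rank, size), the number of flats of the cycle matroid with that rank and size. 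The goal is then a polynomial-time Turing reduction to this counting problem from a standard $\#P$-hard one.

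The plan is to reduce from the problem of computing the independence polynomial of a graph $H$, i.e.\ counting the independent sets of $H$, which is $\#P$-complete. Given $H$, I would construct in polynomial time a graph $G=G(H)$ by attaching a fixed small gadget at each vertex of $H$ and/or replacing each edge of $H$ by a fixed small subgraph, tuned so that the connected set partitions of $G$ fall into two families: a \emph{structured} family in a weight-preserving bijection with the independent sets of $H$, contributing to $Q(G,x,y)$ a term of the shape $(\text{independence polynomial of }H\text{ in disguise})\cdot x^{a}y^{b}$ times an explicit, invertible polynomial factor coming from the gadgets; and a \emph{spurious} family, built from connected blocks the gadget cannot forbid, whose total contribution is either absent from a designated coefficient of $Q(G,x,y)$ or is itself polynomial-time computable and can be subtracted off. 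Reading that designated coefficient $q_{ij}(G)$ — or, if one coefficient does not suffice, evaluating $Q$ on a polynomial-size family of modified inputs and recovering the count by Lagrange interpolation — then yields the number of independent sets of $H$. Plausible alternative sources for the reduction are counting the minimal edge cuts of a graph by size (which sits inside $Q$ through the coefficients $q_{2j}$, since a two-block connected set partition is exactly a bond) and the coefficients of the all-terminal reliability or Whitney-rank polynomial transported across a gadget.

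The hard part will be that a single coefficient $q_{ij}(G)$ simultaneously enumerates many overlapping subgraph statistics — $k$-matchings, vertex partitions into induced subtrees, bonds of a given size, flats of a given size — so the crux is to engineer the gadget so that in one accessible coefficient (or one specialization) exactly one $\#P$-hard statistic survives while all the others are either structurally suppressed (for instance by making a gadget dense enough to force the relevant blocks to induce trees, or sparse enough to forbid large connected blocks) or are transparently polynomial-time. A related subtlety is that $Q$ carries no Möbius signs: unlike the chromatic polynomial it does not expose $\mu(\hat{0},\sigma)$, so one cannot simply reduce from $P(G,\cdot)$ via Theorem~\ref{thm:rota}, and the target must be a quantity already expressed through unsigned flat or connected-partition counts. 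Establishing the bijection on the structured family, the polynomial-time computability (or vanishing) of the spurious contribution, and the interpolation bookkeeping is where the real work lies.
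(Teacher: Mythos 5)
Your proposal is a plan rather than a proof: the entire burden of the argument sits in the gadget you never construct. Everything hinges on a graph $G(H)$ in which the connected set partitions split into a ``structured'' family in weight-preserving bijection with the independent sets of $H$ and a ``spurious'' family that is either suppressed or polynomial-time computable, plus an interpolation step to isolate the relevant coefficient --- and you explicitly defer all of this (``where the real work lies''). Without a concrete gadget, a verified bijection, and a proof that the spurious contributions can be separated off, no reduction has been exhibited, so the statement is not established. (Your opening reformulation of $Q(G,x,y)$ as the size-and-rank generating function of the flats of the cycle matroid is correct, but it is never used to close the gap.)

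The paper's own argument needs none of this machinery and is essentially the observation you make in passing about $q_{2j}$ counting bonds, pushed to general $k$: the coefficient $q_{kj}(G)$ counts connected set partitions with $k$ blocks and $j$ intra-block edges, and deleting the complementary $m-j$ inter-block edges leaves exactly the $k$ blocks as components; hence from $\left[x^{k}\right]Q(G;x,y)$ one can read off, for every $k$, the minimum number of edges whose deletion leaves exactly $k$ components, i.e.\ the minimum $k$-cut. Since the $k$-cut problem with $k$ part of the input is NP-hard (Goldschmidt--Hochbaum), an oracle for $Q(G;x,y)$ solves a hard problem with no gadgets and no interpolation. (Arguably a completed version of your reduction from counting independent sets would justify the label ``\#P-hard'' in the strict counting sense more faithfully than the paper's reduction from an optimization problem, but as submitted your argument is a research programme, not a proof; if you want a short correct proof, follow the cut-coefficient route.)
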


\begin{proof}
\textsc{Goldschmidt} and \textsc{Hochbaum} \cite{GH1994} show that the $k$%
-cut problem is \textbf{NP}-complete by reducing it to the maximum clique
problem. The $k$-cut problem defined as follows: Given an undirected graph
and a number $k\in \mathbb{Z}^{+}$, find a minimum edge subset (a $k$-cut)
that, when deleted, separates the graph into exactly $k$ components. Assume
we know the partition polynomial $Q\left( G;x,y\right) $. Then the
coefficient $\left[ x^{k}y^{l}\right] Q\left( G;x,y\right) $ is the number
of edge subsets $A$ such the graph $\left( V,E\setminus A\right) $ has
exactly $k$ components. Consequently, the smallest power of $y$ in $\left[
x^{k}\right] Q\left( G;x,y\right) $ yields the minimum number of edges that
form a $k$-cut of $G$.
\end{proof}

\bibliographystyle{plain}

\begin{thebibliography}{10}
\bibitem{averbouchMakowskyTittmann:GraphPolynomial}
Ilya Averbouch, Johann~A. Makowsky, and Peter Tittmann.
\newblock A graph polynomial arising from community structure (extended
  abstract).
\newblock In {\em WG}, pages 33--43, 2009.

\bibitem{BirLew46}
C.D. Birkhoff and D.C. Lewis.
\newblock Chromatic polynomials.
\newblock {\em Trans. Amer. Math. Soc.}, 60:335--351, 1946.

\bibitem{rota:OnTheFoundationsOfCombinatorialTheory}
{G. Rota}.
\newblock {On the Foundations of Combinatorial Theory: I. Theory of M\"{o}bius
  Inversion}.
\newblock {\em Z. Wahrscheinlichkeitstheorie 2}, pages 340--368, 1964.

\bibitem{GH1994}
Olivier Goldschmidt and Dorit~S. Hochbaum.
\newblock A polynomial algorithm for the k-cut problem for fixed k.
\newblock {\em Mathematics of Operations Research}, 19(1):24--37, 1994.

\bibitem{wilf:generatingfunctionology}
{H. S. Wilf}.
\newblock {generatingfunctionology}.
\newblock {\em Academic Press}, {1990}.

\bibitem{brendan:listOfGraphs}
B.~D.~Mc Kay.
\newblock Combinatorial data.
\newblock Private homepage, Sep 2009.
\newblock \url{http://cs.anu.edu.au/~bdm/data/graphs.html}.

\bibitem{stanley:ASymmetricFunctionGeneralizationOfTheChromaticPolynomialOfAGr%
aph}
{R. P. Stanley}.
\newblock {A Symmetric Function Generalization of the Chromatic Polynomial of a
  Graph}.
\newblock {\em {Advances in Mathematics}}, {111(1)}:{166--194}, {1995}.

\bibitem{read68}
R.C. Read.
\newblock An introduction to chromatic polynomials.
\newblock {\em J. Combin. Theory}, 4:52--71, 1968.

\bibitem{Stanley73}
R.~P. Stanley.
\newblock Acyclic orientations of graphs.
\newblock {\em Discrete Mathematics}, 5:171--178, 1973.

\bibitem{dowling:WhitneyNumberInequalitiesForGeometricLattices}
{T. A. Dowling and R. M. Wilson}.
\newblock {Whitney Number Inequalities for Geometric Lattices}.
\newblock {\em {Proceedings of the American Mathematical Society}},
  {47}:{504--512}, {Feb} {1975}.

\bibitem{Tutte74}
W.~T. Tutte.
\newblock Chromials.
\newblock {\em Springer Lecture Notes in Mathematics}, 411:243--266, 1974.

\bibitem{Tutte70}
W.T. Tutte.
\newblock On chromatic polynomials and the golden ratio.
\newblock {\em J\@. Combin\@. Theory}, 9:289--296, 1970.

\end{thebibliography}
\nocite{stanley:ASymmetricFunctionGeneralizationOfTheChromaticPolynomialOfAGraph}
\nocite{averbouchMakowskyTittmann:GraphPolynomial}

\end{document}